\def\Ccal{\mathcal{C}}
\def\Acal{\mathcal{A}}
\def\Ncal{\mathcal{N}}
\def\Scal{\mathcal{S}}
\def\Mcal{\mathcal{M}}
\def\Rcal{\mathcal{R}}
\def\Hcal{\mathcal{H}}
\def\Kcal{\mathcal{K}}
\def\Bcal{\mathcal{B}}
\def\Dcal{\mathcal{D}}
\def\Tcal{\mathcal{T}}
\def\Ucal{\mathcal{U}}
\def\Vcal{\mathcal{V}}
\def\Jcal{\mathcal{J}}
\def\Wcal{\mathcal{W}}
\def\Cscr{\mathscr{C}}
\def\Pscr{\mathscr{P}}
\def\Jscr{\mathscr{J}}
\def\c{\mathbb{C}}
\def\r{\mathbb{R}}
\def\n{\mathbb{N}}
\def\z{\mathbb{Z}}
\def\s{\mathbb{S}}
\def\d{\mathbb{D}}
\def\Agot{\mathfrak{A}}
\def\Lgot{\mathfrak{L}}
\def\Hgot{\mathfrak{H}}
\def\dgot{\mathfrak{d}}
\def\lP{\llparenthesis}
\def\rP{\rrparenthesis}
\def\dist{\mathrm{dist}}
\def\Fr{\mathrm{Fr}}
\newcommand\wt{\widetilde}
\newcommand\wh{\widehat}
\titleformat{\subsection}[runin]
{\bfseries} {\thesubsection{.}}{0.15cm}{}[.]
\titleformat{\subsubsection}[runin]
{\em}{\thesubsubsection{.}}{0.15cm}{}[.]
\newtheorem{thm}{Theorem}[section]
\newtheorem{claim}[thm]{Claim}
\newtheorem{lem}[thm]{Lemma}
\newtheorem{cor}[thm]{Corollary}
\newtheorem{question}[thm]{Question}
\theoremstyle{definition}
\newtheorem{defin}[thm]{Definition}
\newtheorem{rem}[thm]{Remark}
\numberwithin{equation}{section}
\numberwithin{figure}{section}
\begin{document}

%% CABECERAS
\fancyhead[CO]{Complete bounded embedded complex curves in $\c^2$} % En las páginas impares, parte izquierda del encabezado, aparecerá el nombre de capítulo
\fancyhead[CE]{A. Alarc\'{o}n$\,$  and$\,$ F.J. L\'{o}pez} % En las páginas pares, parte derecha del encabezado, aparecerá el nombre de sección
\fancyhead[RO,LE]{\thepage} % Números de página en las esquinas de los encabezados

\thispagestyle{empty}

\thispagestyle{empty}

%% Title
\vspace*{1cm}
\begin{center}
{\bf\LARGE Complete bounded embedded complex curves in $\c^2$}

\vspace*{0.5cm}

%% Authors
{\large\bf Antonio Alarc\'{o}n$\;$ and$\;$ Francisco J.\ L\'{o}pez}

%\rule[1mm]{20mm}{0.1mm} $\bullet$ \rule[1mm]{20mm}{0.1mm}
\end{center}

%% Addresses and finantial support
\footnote[0]{\vspace*{-0.4cm}

\noindent A.\ Alarc\'{o}n, F.\ J.\ L\'{o}pez

\noindent Departamento de Geometr\'{\i}a y Topolog\'{\i}a e Instituto de Matem\'aticas (IEMath-GR), Universidad de Granada, Campus de Fuentenueva s/n, E--18071 Granada, Spain

\noindent e-mail: {\tt alarcon@ugr.es}, {\tt fjlopez@ugr.es}
}

%% Abstract, keywords and MSC
\vspace*{0.25cm}

\begin{quote}
{\small
\noindent {\bf Abstract}\hspace*{0.1cm} We prove that any  convex domain of $\c^2$ carries properly embedded complete complex curves. In particular, we give the first examples of  complete bounded embedded complex curves in $\c^2$.

\vspace*{0.1cm}

\noindent{\bf Keywords}\hspace*{0.1cm} Riemann surfaces, complex curves, complete holomorphic embeddings.

\vspace*{0.1cm}

\noindent{\bf Mathematics Subject Classification (2010)}\hspace*{0.1cm} 32C22, 32H02, 32B15.
}
\end{quote}

\vspace*{0.25cm}

%%%%%%
%%%%%%
%%%%%%

\section{Introduction}\label{sec:intro}
Let $M^k$ be a $k$-dimensional connected complex manifold, $k\in \n.$ A holomorphic immersion  $X\colon M^k\to \c^n,$ $n\geq k,$ is said to be {\em complete} if the pull back  $X^*g$ of the Euclidean metric $g$ on $\c^n$ is a complete Riemannian metric on $M^k$. This is equivalent to that $X\circ\gamma$ has infinite Euclidean length for any  {\em divergent} arc $\gamma$ in $M^k$. (Given a non-compact topological space $W$, an arc $\gamma\colon [0,1)\to W$ is said to be divergent if $\gamma(t)$ leaves any compact subset of $W$ when $t\to 1$.) 

An immersion  $X\colon M^k\to \c^n$ is said to be an {\em embedding} if $X\colon M^k \to X(M^k)$  is a homeomorphism. In this case $X(M^k)$ is said an embedded submanifold of $\c^n.$ If $\Omega\subset \c^n$ is a domain, a map $X\colon M^k \to \Omega$ is said to be {\em proper} if $X^{-1}(K)$ is compact for any compact set $K\subset \Omega.$ Proper injective immersions $M^k \to \Omega$ are embeddings.

In 1977, Yang  \cite{Yang1,Yang2}  proposed the question of whether there exist complete  holomorphic embeddings $M^k \to \c^n$, $1\leq k<n$, with bounded image. The first affirmative answer was given two years later by Jones \cite{Jones} for $k=1$ and $n\geq 3$. Only recently, Alarc\'on and Forstneri\v c \cite{AF-1}, as application of Jones' result, have provided examples for any $k\in\n$  and $n\geq 3k$. The problem remained open in the lowest dimensional case: complex curves in $\c^2$ (see \cite[Question 1]{AF-1}). This particular case is especially interesting for topological and analytical reasons that will be more apparent later in this introduction.

The aim of this paper is to fill this gap, proving considerably more:

\begin{thm}\label{th:intro1}
Any  convex domain $\Bcal \subset \c^2$ carries complete properly embedded complex curves.
\end{thm}

The topology of the curves in Theorem \ref{th:intro1} is not controlled; see Question \ref{qu:ex} below. The thesis of Theorem \ref{th:intro1} is obvious when $\Bcal=\Omega \times \c$, where $\Omega\subset\c$ is a convex domain (the flat curve $\{p\}\times \c$, $p\in \Omega$, is complete and properly embedded in $\Omega \times \c$). Further, complete holomorphic graphs over $\Omega$ were constructed in \cite{Isa,AFL-1,AFL-2}. Regarding the case $\Bcal=\c^2$, Bell and Narasimhan \cite{BellNarasimhan}  conjectured that any open Riemann surface can be properly holomorphically embedded in $\c^2$ (obviously, this is possible in no other convex domain of $\c^2$). This classical problem is still open; cf. \cite{ForstnericWold1,ForstnericWold2,Forstneric-book,AL-Narasimhan} and references therein.  Anyway, all the complex curves in these particular instances are far from being bounded.

Following Yang's results \cite{Yang2}, no complete complex hypersurface of $\c^{n}$, $n>1$, has strongly negative holomorphic sectional curvature, and the existence of a complete bounded complex $k$-dimensional submanifold of $\c^n$, $n>k$, implies the existence of such a submanifold of $\c^{2n}$ with  strongly negative holomorphic sectional curvature. Related existence results can be found in \cite{AF-1}. Theorem \ref{th:intro1} has nice consequences regarding these questions:
\begin{cor}\label{cor:iii}
Let $k\in\n$. There exist
\begin{enumerate}[{\rm (i)}]
\item  complete bounded {\em embedded} complex $k$-dimensional submanifolds of $\c^{2k}$, and
\item complete bounded {\em embedded} complex $k$-dimensional submanifolds of $\c^{4k}$ with strongly negative holomorphic sectional curvature.
\end{enumerate}
\end{cor}
\begin{proof}
Let $X\colon \Rcal\to B$ be a complete holomorphic embedding given by Theorem \ref{th:intro1}; where $\Rcal$ is an open Riemann surface and $B\subset \c^2$ is the Euclidean open ball of radius $1/\sqrt{k}$ centered at the origin. Denote by $\Rcal^k=\Rcal\times\ldots\times\Rcal$ the cartesian product of $k$ copies of $\Rcal$ and likewise for $B^k$. Then the map
\[
\varphi\colon \Rcal^k \to B^k\subset \c^{2 k},\quad \varphi(p_1,\ldots,p_k)=(X(p_1),\ldots,X(p_k)),
\]
is a complete bounded holomorphic embedding, proving {\rm (i)}; see \cite[Corollary 1]{AF-1}.

To check {\rm (ii)}, notice that $\varphi(\Rcal^k)\subset B_1,$ where $B_1\subset \c^{2 k}$ is the Euclidean open ball of radius $1$ centered at the origin. Setting $F\colon B_1\to \c^{4k},$ $F(z_1,\ldots,z_{2 k})=(z_1,\ldots,z_{2 k},e^{z_1},\ldots,e^{z_{2 k}})$, the map $F\circ \varphi\colon \Rcal^k \to \c^{4 k}$ proves {\rm (ii)}; see \cite[Sec.\ 1]{Yang2}. 
\end{proof}

An interesting question is whether, given $k\in\n$, the dimensions $2k$ and $4k$ in the above corollary are optimal.

There are many known examples of complete bounded {\em immersed} complex curves in $\c^2$; Jones \cite{Jones} constructed a simply-connected one, Mart\'in, Umehara, and Yamada \cite{MUY2} provided examples with some finite topologies, and Alarc\'on and L\'opez \cite{AL-CY} gave examples of arbitrary topological type. On the other hand, Alarc\'on and Forstneri\v c \cite{AF-1} showed that every bordered Riemann surface is a complete curve in a ball of $\c^2$. Furthermore, the curves in \cite{AL-CY,AF-1} have the extra property of being proper in any given convex domain. However, the  construction of complete bounded {\em embedded}  complex curves in $\c^2$  turns out to be a much more involved problem. 
The main reason why is that (contrarily to what happens in $\c^n,$ $n\geq 3$, where the general position of complex curves is embedded) {\em self-intersections of complex curves in $\c^2$ are stable under deformations}. Nevertheless, there is a simple self-intersection removal method which consists of replacing every normal crossing in a complex curve by an embedded annulus. Unfortunately, this surgery  does not necessarily preserve the length of divergent arcs (hence completeness); indeed, self-intersection points of  immersed complex curves generate {\em shortcuts} in the arising desingularized curves, so divergent arcs of shorter length. 

In order to overcome this difficulty, we have considered a stronger notion of completeness (Def.\ \ref{def:extrinsic}). Given a holomorphic immersion $X\colon M^k\to \c^n$, we denote by ${\rm dist}_{X(M^k)}$ the  (intrinsic) induced Euclidean distance in $X(M^k)$ given by 
\[
{\rm dist}_{X(M^k)}(p,q)=\inf\{\ell(\gamma)\colon \gamma \subset X(M^k) \; \text{rectifiable arc connecting $p$ and $q$}\}
\]
for any $p,$ $q\in X(M^k)$; where $\ell(\cdot)$ means Euclidean length in $\c^n$. If $X$ is injective, the function ${\rm dist}_{X(M^k)}\circ (X,X)\colon M^k\times M^k\to \r$ is the intrinsic distance in $M^k$ induced by $X$; otherwise it is a pseudo-distance. We call ${\rm dist}_{X(M^k)}$  and $(X(M^k),{\rm dist}_{X(M^k)})$ the {\em image distance} and  the {\em image metric space} of  $X\colon M^k\to \c^n$.

\begin{defin}\label{def:extrinsic} 
A holomorphic immersion $X\colon M^k \to \c^n$ is said to be {\em image complete} if $(X(M^k),{\rm dist}_{X(M^k)})$ is a complete metric space (in other words, if every rectifiable divergent arc in $X(M^k)$ has infinite Euclidean length).
\end{defin}
Obviously, image completeness  implies completeness, and both notions are equivalent for injective immersions.
The image distance is very convenient for our purposes since it is preserved by self-intersection removal procedures.  As a matter of fact, the proof of Theorem \ref{th:intro1} is connected with the general existence Theorem \ref{th:intro2} below. As far as the authors' knowledge extends, the followings are the first known examples of image complete bounded immersed complex curves in $\c^2$.

\begin{thm}\label{th:intro2}
Let $S$ be an open orientable smooth surface and let $\Bcal\subset\c^2$ be a convex domain. 

Then there exist a complex structure $\Jcal$ on $S$ and an image complete proper holomorphic immersion $(S,\Jcal)\to \Bcal$.
\end{thm}

Let us say a word on the proof of Theorem \ref{th:intro1}; see the more general Theorem \ref{th:embe} in Sec.\ \ref{sec:theorem}. The proof of the theorem relies on a recursive process involving an approximation result by embedded complex curves in $\c^2$ (Lemma \ref{lem:compilation}), which is the core of the paper. In this lemma we prove that any embedded compact complex curve $\Cscr$ with boundary $b \Cscr$ in the frontier $\Fr \Dcal$  of a regular strictly convex domain $\Dcal$, can be approximated by another {\em embedded} complex curve $\Cscr'$ with $b \Cscr'\subset \Fr \Dcal'$, where  $\Dcal'$ is  any given  larger convex domain. The curve $\Cscr'$  has possibly  higher topological genus than $\Cscr$ and contains a biholomorphic copy of it, roughly speaking 
 $\Cscr\subset \Cscr'$. Furthermore, this procedure can be done  so that  $\Cscr'\setminus\Cscr$ lies in $\Dcal'\setminus\Dcal$ and the intrinsic Euclidean distance in  $\Cscr'$ from $\Cscr$ to $b \Cscr'$  is suitably larger than the distance  between $\Dcal$ and $\Fr\Dcal'$ in $\c^2$. These facts will be the key for obtaining  properness and completeness while preserving boundedness in the proof of Theorem \ref{th:embe}.

In order to prove Lemma \ref{lem:compilation} (see Sec.\ \ref{sec:lemma-proof}), we have introduced some configurations of  slabs in $\c^2$  that we have called {\em tangent nets} (see Subsec.\ \ref{subsec:net}). Given a regular strictly convex domain $\Dcal\Subset\c^2$, a tangent net $\Tcal$ for $\Dcal$ is a tubular neighborhood of a finite collection of (affine) tangent hyperplanes to the frontier $\Fr\Dcal$; see Def.\ \ref{def:net} and Fig.\ \ref{fig:net}. Given another regular strictly convex domain $\Dcal'$, $\Dcal\Subset\Dcal'\Subset\c^2$, we show the existence of tangent nets $\Tcal$ for $\Dcal$ with the property  that any Jordan arc in $\Tcal$  connecting $\Fr\Dcal$ and $\Fr\Dcal'$ has {\em large}  length comparatively to the distance  between $\Dcal$ and $\Fr\Dcal'$ in $\c^2$; see Lemma \ref{lem:net}.  
The second step in the proof of Lemma \ref{lem:compilation} is an approximation result by immersed complex curves along tangent nets (see Lemma \ref{lem:main} in Subsec.\ \ref{subsec:lemma}). It asserts that any immersed compact complex curve $\Sigma$ in $\c^2$ with boundary  $b \Sigma \subset \Fr\Dcal$, can be approximated by another one $\wt\Sigma$ such that $b \wt\Sigma  \subset \Fr\Dcal'$ and  $\wt\Sigma\cap (\overline{\Dcal'}\setminus \Dcal)$ is contained inside a suitable tangent net for $\Dcal$. This allows us to estimate the growth of the {\em image diameter} (according to Def.\ \ref{def:extrinsic}) of $\wt\Sigma$, and conclude that it is large comparatively to the distance between $\Dcal$ and $\Fr\Dcal'$. This represents a clear innovation with respect to previous constructions where only the growth of the {\em intrinsic diameter} could be estimated (cf.\ \cite{Nadirashvili,AL-CY,AF-1} and references therein). 
We conclude the proof of Lemma \ref{lem:compilation} by combining the above two results with a desingularization  result adapted to our setting (see Lemma \ref{lem:embe} in Subsec.\ \ref{subsec:embe}).  To the best of the authors' knowledge, this is the first such application of the surgery technique in the literature. Since this method increases the topology, the complex curves in Theorem \ref{th:intro1} could have infinite genus.

On the other hand, Theorem \ref{th:intro2} follows from a standard recursive application of Lemmas \ref{lem:net} and \ref{lem:main} (see the more precise Theorem \ref{th:extrinsic} in Sec.\ \ref{sec:extrinsic}).  

Since complex curves in $\c^2$ are area-minimizing surfaces in $\r^4$, our results connect with the so-called {\em Calabi-Yau problem for embedded surfaces}. This problem deals with the existence of complete embedded minimal surfaces in bounded domains of $\r^3$. Although it still remains open, it is known that solutions must have either infinite genus or uncountably many ends (see Colding and Minicozzi \cite{ColdingMinicozzi-CY} and Meeks, P\'erez, and Ros \cite{MeeksPerezRos-CY}). On the other hand, 
the construction of embedded complex discs in $\c^2$ is a subject with vast literature; see for instance \cite{Glo,FGS,Forstneric-Glo,Glo2,Glo1}. Thus, in view of Theorem \ref{th:intro1}, one is led to ask:

\begin{question} \label{qu:ex}
Do there exist complete bounded holomorphic embeddings $M\to \c^2$ with $M$ an open Riemann surface of finite topology? What is the answer if $M$ is the  complex unit disc?
\end{question}

Our main tools are  the classical Runge and Mergelyan approximation theorems for holomorphic functions and  basic convex body theory. 

%%%%%%
%%%%%%
%%%%%%

\section{Preliminaries}\label{sec:prelim}

We denote by $\|\cdot\|$, $\langle\cdot,\cdot\rangle$, $\dist(\cdot,\cdot)$, $\ell(\cdot)$, and ${\rm diam}(\cdot)$ the Euclidean norm, inner product, distance, length, and diameter in $\r^n$, $n\in\n$. 
Given two points $p$ and $q$ in $\r^n$, we denote by $[p,q]$ (resp., $]p,q[$) the closed (resp., open) straight segment in $\r^n$ connecting $p$ and $q$. 

In the complex Euclidean space $\c^n\cong\r^{2n}$ we denote by $\lP\cdot,\cdot\rP\colon\c^n\times\c^n\to\c$ the bilinear Hermitian product defined by $\lP (\zeta_1,\ldots,\zeta_n),(\xi_1,\ldots,\xi_n)\rP = \sum_{i=1}^n \zeta_i\overline{\xi}_i$, where $\overline{\,\cdot}$ means complex conjugation. Observe that $\langle\cdot,\cdot\rangle =\Re \lP\cdot,\cdot\rP$. Given $p\in \c^n,$ we denote by $\lP p \rP^\bot=\{q\in \c^n\colon \lP p,q\rP=0\},$ ${\rm span}_\r(p)=\{t p\colon t\in\r\}$, and ${\rm span}_\c(p)=\{\zeta p\colon \zeta\in\c\}$.

%\subsection{Riemann surfaces}

Given an $n$-dimensional topological real manifold $M$ with boundary, we denote by $bM$ the $(n-1)$-dimensional
topological manifold determined by its boundary points. For any subset $A\subset M$, we denote by
$A^\circ$, $\overline{A}$, and $\Fr A=\overline{A}\setminus A^\circ$, the interior, the closure, and the frontier of $A$ in $M$, respectively. Given subsets $A$ and $B$ of $M$, we write $A\Subset B$ if $\overline{A}$ is compact and $\overline{A}\subset B^\circ$. By a {\em domain} in $M$ we mean an open connected subset of $M\setminus bM$. By a {\em region} in $M$ we mean a proper topological
subspace of $M$ being an $n$-dimensional compact manifold with non-empty boundary.

A topological surface $S$ is said to be {\em open} if it is non-compact and $b S=\emptyset$. A domain $\Rcal$ in an open connected Riemann surface $\Ncal$ is said to be a {\em bordered domain} if
$\Rcal\Subset \Ncal$ and $\overline{\Rcal}$ is a region with smooth boundary $b\overline{\Rcal}=\Fr\Rcal$. In this case,  $b\overline{\Rcal}$ consists of finitely many smooth Jordan curves.

Given a compact topological space $K$ and a continuous map $f\colon K\to \r^n,$ we denote by 
\[
\|f\|_{0,K}:= \max_{p\in K} \|f(p)\|
\]
the maximum norm of $f$ on $K.$ The corresponding space of continuous functions $K \to \r^n$ will be endowed with the $\Ccal^0$ topology associated to $\|\cdot\|_{0,K}.$

Let  $\Ncal$ be an open Riemann surface endowed with a  nowhere-vanishing holomorphic $1$-form  $\vartheta_\Ncal$  (such a $1$-form exists by the Gunning-Narasimhan theorem \cite{GunningNarasimhan}). Let $K$ be a compact set in  $\Ncal$. A function $f\colon K\to \c^n$, $n\in\n$, is said be holomorphic if it is the restriction to $K$ of a holomorphic function defined on a domain in $\Ncal$ containing $K$. In such case, we denote by  
\begin{equation}\label{eq:norma1}
\|f\|_{1,K;\vartheta_\Ncal}:=\max_{P\in K} \max \big\{ \|f(P)\|\,,\,\|\frac{df}{\vartheta_\Ncal}(P)\|\big\}
\end{equation}
the $\Ccal^1$ maximum norm of $f$ on $K$ (with respect to $\vartheta_\Ncal$). If there is no place for ambiguity, we write $\|f\|_{1,K}$ instead of $\|f\|_{1,K;\vartheta_\Ncal}$.  The space of holomorphic functions $K\to \c^n$ will be endowed with the $\Ccal^1$ topology associated to the norm $\|\cdot\|_{1,K;\vartheta_\Ncal}$, which does not depend on the choice of $\vartheta_\Ncal$.

Given a holomorphic immersion $f\colon K\to\c^n$, a point $w\in f(K)$ is said to be a {\em double point of $f$} (or of $f(K)$) if $f^{-1}(w)$ contains more than one point. A double point $w\in f(K)$ is said to be a {\em normal crossing} if $f^{-1}(w)$ consists of precisely two points, $P$ and $Q$, and $df_P(T_P\Ncal)$ and $df_Q(T_Q\Ncal)$ are transverse.

\begin{rem}\label{rem:position}
It is well known that any holomorphic function $K\to\c^n$, $n\geq 3$, can be approximated  in the $\Ccal^1$ topology on $K$ by holomorphic embeddings. 

However, this is no longer true in the lowest dimensional case; double points of an immersed complex curve in $\c^2$ are stable under deformations. Anyway, any holomorphic function $K\to\c^2$ can be approximated  in the $\Ccal^1$ topology on $K$ by holomorphic immersions all whose double points are normal crossings. We call this property {\em the general position argument}.
\end{rem}
Throughout this paper we will deal with  regular  convex domains $\Dcal\Subset \c^2$, bordered  domains $\Rcal \Subset \Ncal$, and holomorphic immersions $X:\overline{\Rcal} \to \c^2$ with $X(\overline{\Rcal})\subset \Dcal$.  In this setting, it  is interesting to notice that:
\begin{rem} \label{re:trans}
If $X(b \overline{\Rcal})\subset \Fr \Dcal$ then $X(\overline{\Rcal})$ and $\Fr \Dcal$ meet transversally. 
\end{rem}
Indeed, assume for a moment that $X(\overline{\Rcal})$ and $\Fr \Dcal$ meet tangentially at $p:=X(P)$, $P\in b\overline{\Rcal}$. By basic theory of harmonic functions, there exists a sufficiently small neighborhood $U$ of $P$ in $ \Mcal$ such that $\alpha:=X^{-1}(p+T_p \Fr \Dcal)\cap U$ consists of a system of at least two analytical arcs  intersecting equiangularly at $P$. Furthermore,  contiguous components of $U\setminus\alpha$ lie in opposite sides of $p+T_p \Fr \Dcal$. On the other hand, since $\overline{\Rcal}$ has smooth boundary and $X(b \overline{\Rcal})\subset \Fr \Dcal$ then $X(U\cap \overline{\Rcal})\subset \Dcal$, and so, $X(U\cap \overline{\Rcal})$ must lie at one side of $p+T_p \Fr \Dcal$, a contradiction.

A  compact (in most cases arcwise-connected) subset $K$ of an open Riemann surface $\Ncal$ is said to be {\em Runge} if $\Ncal\setminus K$ has no relatively compact connected components in $\Ncal$; equivalently, if the inclusion map $i\colon K\hookrightarrow \Ncal$ induces a group monomorphism on homology $i_*\colon \Hcal_1(K,\z) \to \Hcal_1(\Ncal,\z).$  In this case we consider $\Hcal_1(K,\z) \subset \Hcal_1(\Ncal,\z)$ via this monomorphism. Two Runge compact sets  $K_1$ and $K_2$ of $\Ncal$ are said to be {\em (homeomorphically)  isotopic} if there exists  a homeomorphism $\eta\colon K_1 \to K_2$ such that the induced group morphism on homology, namely $\eta_*$, equals ${\rm Id}_{\Hcal_1(K_1,\z)}$. Such an $\eta$ is said to be an {\em isotopical homeomorphism}. Two Runge regions $K_1$ and $K_2$ of $\Ncal$ are (homeomorphically)  isotopic  if and only if $\Hcal_1(K_1,\z)=\Hcal_1(K_2,\z).$

%%%%%%
%%%%%%
%%%%%%

\subsection{Convex Domains}

A convex domain $\Dcal\subset\r^n$, $\Dcal \neq \r^n$, $n\geq 2$, is said to be {\em regular} (resp., {\em analytic}) if its frontier $\Fr\Dcal=\overline{\Dcal}\setminus\Dcal$ is a regular (resp.,  analytic) hypersurface of $\r^n$.

Let $\Dcal$ be a regular convex domain of $\r^n$, $\Dcal \neq \r^n$, $n\geq 2$.

For any $p\in\Fr\Dcal$ we denote by $T_p \Fr\Dcal$ the tangent space to $\Fr \Dcal$ at $p$. Recall that  ${\Dcal} \cap (p+T_p \, \Fr \Dcal)=\emptyset$ for all $p \in \Fr \Dcal$.

We denote by $\nu_\Dcal\colon \Fr \Dcal \rightarrow \s^{n-1}$ the
outward pointing unit normal of $\Fr \Dcal$. 
For any $p\in\Fr \Dcal$ and $v \in (T_p\, \Fr \Dcal)\cap \,\s^{n-1}$, we denote by  $\kappa_\Dcal (p,v)$ the  normal curvature at $p$ in the direction of $v$   with respect to $-\nu_\Dcal$; obviously $\kappa_\Dcal(p,v)\geq 0$ since $\Dcal$ is convex. Let $\kappa(p)\geq 0$ be
the maximum of the principal curvatures of $\Fr \Dcal$ at $p$ with respect to $-\nu_\Dcal$, and set
\begin{equation}\label{eq:kD}
\kappa (\Dcal):= \sup \{\kappa(p)\colon p\in\Fr \Dcal\} \geq 0.%\cup\{+\infty\}.
\end{equation}

The domain $\Dcal$ is said to be {\em strictly convex} if $\kappa_\Dcal (p,v)>0$ for all $p \in \Fr \Dcal$ and $v \in (T_p \, \Fr \Dcal)\cap \s^{n-1}$. In this case,  $\overline{\Dcal} \cap (p+T_p \, \Fr \Dcal)=\{p\}$ for all $p \in \Fr \Dcal$. 
If $\Dcal$ is bounded (i.e., $\Dcal\Subset\r^n$) and strictly convex, then $0<\kappa(\Dcal)<+\infty$.

Assume that $\Dcal$ is bounded  and strictly convex. For any $t >-1/\kappa(\Dcal)$ we denote by $\Dcal_t$ the bounded regular strictly convex domain in $\r^n$ with frontier $\Fr \Dcal_t=\{p+ t\, \nu_\Dcal(p) \colon p \in \Fr \Dcal\}$; that is, the {\em parallel convex domain} to $\Dcal$ at (oriented) distance $t$. Observe that $\Dcal=\Dcal_0$ and $\Dcal_{t_1}\Subset \Dcal_{t_2}$ if $t_1<t_2$. 

%Set $\Dcal_{-1/\kappa(\Dcal)}:=\cap_{t>-1/\kappa(\Dcal)} D_t$ and denote by
%\[
%\pi_\Dcal\colon \r^n\setminus\Dcal_{-1/\kappa(\Dcal)}\to \Fr \Dcal,\enskip \pi_\Dcal(p+t\nu_\Dcal(p))=p,
%\]
%the normal projection. We denote by  $\nu_\Dcal$ the {\em extended normal map} $\nu_\Dcal\circ \pi_\Dcal\colon \r^n\setminus\Dcal_{-1/\kappa(\Dcal)} \to \s^{n-1}.$

For any couple of compact subsets $K$ and $O$ in $\r^n$,
the {\em Hausdorff distance} between $K$ and $O$ is given by
\[
\dgot^{\rm H}(K,O):=\max\Big\{ \sup_{x\in K} \inf_{y\in O} \|x-y\|\;,\;
\sup_{y\in K} \inf_{x\in O} \|x-y\|\Big\}.
\]

A sequence $\{K^j\}_{j \in \n}$ of (possibly unbounded) closed subsets of $\r^n$ is said to converge in the {\em Hausdorff topology} to a closed subset $K^0$ of $\r^n$ if
$\{K^j\cap B\}_{j \in \n}\to K^0 \cap B$ in the Hausdorff distance for any closed Euclidean ball $B \subset \r^n.$ If $K^j\Subset K^{j+1} \subset K^0$ for all $j\in\n$ and $\{K^j\}_{j \in \n} \to K^0$ in the Hausdorff topology, then we write $\{K^j\}_{j \in \n} \nearrow K^0.$ %Likewise we write $\{K^j\}_{j \in \n} \searrow K^0$ provided that $K^0\subset K^{j+1}\Subset K^j$ for all $j\in\n$ and $\{K^j\}_{j \in \n} \to K^0$ in the Hausdorff topology.

%The following approximation result follows from classical Minkowski's Theorem.

\begin{thm}[\cite{Minkowski,MeeksYau}]\label{th:mink}
Let $\Bcal\subset\r^n$ be a (possibly neither bounded nor regular) convex domain. 
Then there exists a sequence $\{\Dcal^j\}_{j\in\n}$ of   bounded strictly convex analytic domains in $\r^n$ with $\{\overline{\Dcal^j}\}_{j \in \n} \nearrow \overline{\Bcal}.$
%If in addition $C$ is bounded, then there exists a sequence $\{\Dcal^j\}_{j\in\n}$ of   bounded strictly convex analytic domains in $\r^n$ with $\{\overline{\Dcal^j}\}_{j \in \n} \searrow \overline{C}.$ 
\end{thm}

The following {\em distance type}  function for convex domains will play a fundamental role throughout this paper.
\begin{defin} \label{def:d}
Let $\Dcal$ and $\Dcal'$ be  bounded regular strictly convex domains in $\r^n$ ($n\geq 2$), $\Dcal \Subset \Dcal'.$ We denote by
\[
{\bf d}(\Dcal,\Fr\Dcal'):=\Big(\dist(\Dcal,\Fr\Dcal')+\frac{1}{\kappa(\Dcal)}\Big)\sqrt{\frac{\dist(\Dcal,\Fr\Dcal')}{\dist(\Dcal,\Fr\Dcal')+2/\kappa(\Dcal)}}
\]
(see \eqref{eq:kD}).
\end{defin}

\begin{rem} \label{re:compara}
Observe that ${\bf d}(\Dcal,\Fr\Dcal')>\dist(\Dcal,\Fr\Dcal')$. Furthermore,  ${\bf d}(\Dcal,\cdot)$ and $\sqrt{\dist(\Dcal,\cdot)}$ are infinitesimally comparable in the  sense that $\lim_{n\to \infty} \frac{\sqrt{{\rm dist}(\Dcal,\Fr\Dcal^n)}}{{\bf d}(\Dcal,\Fr\Dcal^n)}=\sqrt{2\kappa(\Dcal)}>0$ for any sequence $\{\Dcal^n\}_{n\in \n}$ of bounded regular strictly convex domains such that $\Dcal \Subset \Dcal^n$ $\forall n\in\n$ and $\{\overline{\Dcal^n}\}_{n\in \n} \to \overline{\Dcal}$ in the Hausdorff topology.
\end{rem} 

Lemma \ref{lem:sucprop} below  will simplify the exposition of the proof of our main results. Its proof relies on the above Remark \ref{re:compara}.

\begin{defin} \label{def:proper}
Let $\Bcal$ be a (possibly neither bounded nor regular) convex domain  in $\r^n.$ A sequence $\{\Dcal^k\}_{k \in \n}$ of   convex domains in $\r^n$ is said to
be ${\bf d}$-{\em proper in $\Bcal$} if $\Dcal^k$ is bounded, regular, and strictly  convex for all $k\in\n$,  $\{\overline{\Dcal^k}\}_{k \in \n} \nearrow \overline{\Bcal}$ in the Hausdorff topology, and 
\[
\sum_{k \in \n} {\bf d}(\Dcal^k,\Fr \Dcal^{k+1})=+\infty.
\]
\end{defin}

\begin{lem}\label{lem:sucprop}
Any convex domain in $\r^n$ admits a ${\bf d}$-proper sequence of convex domains.
\end{lem}
\begin{proof}
Let $\Bcal$ be a convex domain  in $\r^n.$ Let $\{C^j\}_{j\in\n}$ be a sequence of bounded strictly convex analytic domains in $\r^n$ with $\{\overline{C^j}\}_{j \in \n} \nearrow \overline{\Bcal}$; cf. Theorem \ref{th:mink}. For the sake of simplicity write $d_j:={\rm dist}(C^j,\Fr C^{j+1})$ and $\kappa_j:=\kappa(C^j)$ for all $j\in\n$.

For each $j \in \n$ choose $m_j\in \n$ large enough so that 
\begin{equation}\label{eq:m_j}
\sum_{a=1}^{m_j}  \frac1{a}\geq \sqrt{\frac{6d_j\kappa_j^2+2\pi^2\kappa_j}{6d_j}}.
\end{equation}
Denote by $d_{a,j}=d_j\frac{6}{\pi^2}\sum_{h=1}^a 1/h^2$, and notice that $d_{a,j}<d_j$; take into account that $\sum_{h=1}^\infty 1/h^2=\pi^2/6$. Set  $C^{0,j}:=C^j$ and $C^{a,j}:=(C^j)_{d_{a,j}}$, for all $a=1,\ldots,m_j,$
the outer parallel convex domain to $C^j$ at distance $d_{a,j}$.
Observe that $C^{a,j}$ is analytic and strictly convex, 
\begin{equation}\label{eq:nearrow}
C^j\Subset C^{a,j} \Subset C^{a+1,j} \Subset C^{j+1},
\end{equation} 
\begin{equation}\label{eq:daj}
{\rm dist}(C^{a,j},\Fr C^{a+1,j})=d_{a+1,j}-d_{a,j}=6d_j/(\pi (a+1))^2,
\end{equation}
and 
\begin{equation}\label{eq:kcaj}
\kappa(C^{a,j})=\kappa_j/(1+d_{a,j} \kappa_j)\leq \kappa_j\enskip \text{for all $j\in\n$ and $a\in\{0,\ldots,m_j-1\}$.}
\end{equation}
Set
\[
f\colon ]0,+\infty[\times ]0,+\infty[\to]0,+\infty[,\quad f(d,\kappa)=(d+1/\kappa)\sqrt{\frac{d}{d+2/\kappa}},
\]
and note that %$f(\cdot,\kappa)$ is increasing for all $\kappa>0$, 
$f(d,\cdot)$ is decreasing for all $d>0$ and $f(6d_j/(\pi (a+1))^2,\kappa(C^{a,j}))={\bf d}(C^{a,j},\Fr C^{a+1,j})$ for all $j\in\n$ and $a\in\{1,\ldots,m_j-1\}$; see \eqref{eq:daj}.
 Therefore, 
\begin{eqnarray*}
\sum_{a=0}^{m_j-1} {\bf d}(C^{a,j},\Fr C^{a+1,j}) & = &  \sum_{a=0}^{m_j-1} f\big( 6d_j/(\pi (a+1))^2 , \kappa(C^{a,j})\big)\\
& \stackrel{\eqref{eq:kcaj}}{\geq} &  \sum_{a=0}^{m_j-1} f\big( 6d_j/(\pi (a+1))^2 , \kappa_j\big)\\
 & > & \sqrt{\frac{6d_j}{6d_j\kappa_j^2+2\pi^2\kappa_j}}\Big(\sum_{a=0}^{m_j-1}  \frac1{a+1}\Big)\; \stackrel{\eqref{eq:m_j}}{\geq} \; 1.
\end{eqnarray*}

Let $\{\Dcal^k\}_{k \in \n}$ denote the enumeration of $ \{C^{a,j}\colon j \in \n,\, a\in\{0,\ldots,m_j\}\}$ such that $\Dcal^k \Subset \Dcal^{k+1}$ for all $k\in\n$; see  \eqref{eq:nearrow}.  Then 
\[
\sum_{k \in \n} {\bf d}(\Dcal^k,\Fr \Dcal^{k+1}) \geq \sum_{j \in \n} \Big( \sum_{a=0}^{m_j-1} {\bf d}(C^{a,j},\Fr C^{a+1,j})\Big)\geq \sum_{j\in\n} 1=+\infty.
\] 
This property and the fact that $\{\overline{C^j}\}_{j\in\n}\nearrow \overline{\Bcal}$  imply that the sequence $\{\Dcal^k\}_{k \in \n}$ is ${\bf d}$-proper in $\Bcal$. This proves the lemma.
\end{proof}

\section{Complete properly embedded complex curves in convex domains of $\c^2$}\label{sec:theorem}

In this section we prove the main result of this paper; Theorem \ref{th:intro1}. It will be a particular instance of the following more precise result.

\begin{thm}\label{th:embe}
Let $\Bcal$ be a (possibly neither bounded nor regular) convex domain in $\c^2$. Let $\Dcal\Subset\Bcal$ be a strictly convex bounded regular domain. Let $\Ncal$ be an open Riemann surface equipped with a nowhere-vanishing holomorphic $1$-form  $\vartheta_\Ncal$, and let $\Rcal$ be a bordered domain in $\Ncal$.

Then, for any $\varepsilon\in]0,\min \{\dist(\overline{\Dcal},\Fr\Bcal),1/\kappa(\Dcal)\}[$ and any holomorphic embedding $X\colon \overline{\Rcal}\to\c^2$ such that
\begin{equation}\label{eq:th-embe}
X(b \overline{\Rcal})\subset \Fr \Dcal,
\end{equation}
there exist an open Riemann surface $\Mcal$ (possibly of infinite topological genus) and a complete holomorphic embedding $Y\colon\Mcal \to \c^2$ enjoying the following properties:
\begin{enumerate}[\rm (i)]
\item $\overline{\Rcal} \subset \Mcal$.
\item $\|Y-X\|_{1,\overline{\Rcal};\vartheta_\Ncal}<\varepsilon$ (see \eqref{eq:norma1}).
\item $Y(\Mcal)\subset\Bcal$ and $Y\colon\Mcal\to\Bcal$ is a proper map.
\item $Y(\Mcal\setminus \overline{\Rcal})\subset \Bcal\setminus \Dcal_{-\varepsilon}$.
\end{enumerate}
\end{thm}

The proof of Theorem \ref{th:embe} follows from a recursive process involving the following approximation result by embedded complex curves. 

\begin{lem}[Approximation by embedded complex curves]\label{lem:compilation}
Let $\Dcal$ and $\Dcal'$ be bounded regular strictly convex domains in $\c^2$, $\Dcal\Subset\Dcal'$. Let $\Ncal$ be an open Riemann surface equipped with a nowhere-vanishing holomorphic $1$-form $\vartheta_\Ncal$ and let $\Ucal$ be a bordered domain in $\Ncal$.

Then, for any $\epsilon\in]0,\min\{{\rm dist}(\overline{\Dcal},\Fr\Dcal'),1/\kappa(\Dcal)\}[$ and any holomorphic embedding $X\colon \overline{\Ucal}\to\c^2$ such that
\begin{equation}\label{eq:compilation1}
X(b \overline{\Ucal})\subset \Fr \Dcal,
\end{equation}
there exist an open Riemann surface $\Ncal'$, a bordered domain $\Ucal'\Subset\Ncal'$, and a holomorphic embedding $X'\colon \overline{\Ucal'}\to\c^2$ enjoying the following properties:
\begin{enumerate}[\it i)]
\item $\overline{\Ucal}\subset \Ucal'$.
\item $\|X'-X\|_{1,\overline{\Ucal};\vartheta_\Ncal}<\epsilon$.
\item $X'(b\overline{\Ucal'})\subset \Fr\Dcal'$.
\item $X'(\overline{\Ucal'}\setminus\Ucal)\cap \overline{\Dcal}_{-\epsilon}=\emptyset$.
\item $\ell(X'(\gamma))>{\bf d}(\Dcal,\Fr\Dcal')-\epsilon$ for any Jordan arc $\gamma$ in $\overline{\Ucal'}$ connecting $b\overline{\Ucal}$ and $b\overline{\Ucal'}$.
\end{enumerate}
\end{lem}
Roughly speaking, this lemma ensures that any embedded compact complex curve $X:\overline{\Ucal} \to \c^2$ with boundary in the frontier of a regular strictly convex domain $\Dcal\Subset\c^2$, can be approximated by another {\em embedded} complex curve   $X' \colon \overline{\Ucal'} \to \c^2$ with boundary in the frontier of a larger convex domain $\Dcal'$. This can be done  so that  $X'(\overline{\Ucal'}\setminus \Ucal)$ lies outside $\Dcal$ and the intrinsic diameter of $X'(\overline{\Ucal'})$ exceeds in ${\bf d}(\Dcal,\Fr\Dcal')$ the one of $X(\Ucal)$; see Def.\ \ref{def:d}. These facts will be the key for obtaining  properness and completeness while preserving boundedness in the proof of Theorem \ref{th:embe}. We point out that $\Ucal'$ has possibly higher topological genus than $\Ucal$.

Lemma \ref{lem:compilation} will be proved later in Sec.\ \ref{sec:lemma-proof}; see in particular Subsec.\ \ref{subsec:proofcompi}. We are now ready to prove our main result.

\begin{proof}[Proof of Theorem \ref{th:embe}]
Denote by $\Dcal^0:=\Dcal$ and let $\{\Dcal^n\}_{n\in\n}$ be a ${\bf d}$-proper sequence of convex domains in $\Bcal$ with $\Dcal^0\Subset\Dcal^1$; see Def.\ \ref{def:proper} and Lemma \ref{lem:sucprop}. Call $\Ncal_0=\Ncal,$ $\vartheta_0=\vartheta_\Ncal$, $\Ucal_0=\Rcal$, and $X_0=X$. Fix any $\epsilon_0\in]0,\varepsilon/2[$. 

Let us recursively construct a sequence $\{\Xi_n=(\Ncal_n, \vartheta_n, \Ucal_n, X_n, \epsilon_n)\}_{n\in\n}$; where
\begin{itemize}
\item $\Ncal_n$ is an open Riemann surface,
\item $\vartheta_n$ is a nowhere-vanishing holomorphic $1$-form on $\Ncal_n$,
\item $\Ucal_n\Subset\Ncal_n$ is a bordered domain,
\item $X_n\colon\overline{\Ucal}_n\to\c^2$ is a holomorphic embedding, and
\item $\epsilon_n\in]0,\min\{{\rm dist}(\Dcal^{n-1},\Fr\Dcal^{n}),1/\kappa(\Dcal^{n-1})\}[$,
\end{itemize}
such that the following properties are satisfied for all $n\in\n$:

\begin{enumerate}[\rm (A$_n$)]
\item $\overline{\Ucal}_{n-1}\subset \Ucal_n$ (in particular, the closure of $\Ucal_{n-1}$ in $\Ncal_{n-1}$ agrees with the one in $\Ncal_n$).
\item $\min_{\overline{\Ucal}_{n-1}}|\vartheta_{n-1}/\vartheta_n|>1$.
\item $\epsilon_n$ verifies that
\begin{enumerate}[\rm (C.1$_n$)]
\item $\epsilon_n<\epsilon_{n-1}/2<\varepsilon/2^{n+1}$ and 
\item every holomorphic function $F\colon \overline{\Ucal}_{n-1}\to\c^2$ with 
$\|F-X_{n-1}\|_{1,\overline{\Ucal}_{n-1};\vartheta_{n-1}}<2\epsilon_n$
is an embedding on $\overline{\Ucal}_{n-1}$. 
\end{enumerate}
\item $\|X_n-X_{n-1}\|_{1,\overline{\Ucal}_{n-1};\vartheta_{n-1}}<\epsilon_n$.
\item $X_n(b\overline{\Ucal}_n)\subset \Fr\Dcal^n$; hence   $X_n(\overline{\Ucal}_n)$ and $\Fr\Dcal^n$  meet transversally (see Remark \ref{re:trans}).
\item $X_n(\overline{\Ucal}_a\setminus{\Ucal}_{a-1})\cap \overline{\Dcal^{a-1}_{-\epsilon_{a}}}=\emptyset$ for all $a\in\{1,\ldots,n\}$.
\item $\ell(X_n(\gamma))>{\bf d}(\Dcal^{a-1},\Fr\Dcal^a)-\epsilon_a$ for any Jordan arc $\gamma$ in $\overline{\Ucal}_a$ connecting $b\overline{\Ucal}_{a-1}$ and $b\overline{\Ucal}_{a}$, for all $a\in\{1,\ldots,n\}$.
\end{enumerate}

The basis of the induction is given by setting $\Xi_0=(\Ncal_0, \vartheta_0, \Ucal_0, X_0, \epsilon_0)$. Remark \ref{re:trans}  gives that $X_0(\overline{\Ucal}_0)$ and $\Fr\Dcal^0$ meet transversally, proving  {\rm (E$_0$)}. Properties {\rm ($j_0$)}, $j\neq {\rm E},$ are empty.

For the inductive step, let $n\in\n$, assume that we have already constructed $\Xi_m$ for all $m\in\{0,\ldots,n-1\}$, and let us construct $\Xi_n$.

Let $\epsilon_n$ be a real number in $]0,\min\{{\rm dist}(\Dcal^{n-1},\Fr\Dcal^{n}),1/\kappa(\Dcal^{n-1})\}[$ and satisfying {\rm (C$_n$)} to be specified later. By {\rm (E$_{n-1}$)}, Lemma \ref{lem:compilation} applies to the data
\[
(\Dcal,\Dcal',\Ncal,\vartheta_\Ncal,\Ucal,\epsilon,X)=(\Dcal^{n-1},\Dcal^n,\Ncal^{n-1},\vartheta_{n-1},\Ucal_{n-1},\epsilon_n,X_{n-1})
\]
furnishing an open Riemann surface $\Ncal^n$, a bordered domain $\Ucal_n\Subset\Ncal^n$, and a holomorphic embedding $X_n\colon \overline{\Ucal}_n\to\c^2$ satisfying {\rm (A$_n$)}, {\rm (D$_n$)}, {\rm (E$_n$)}, and properties {\rm (F$_n$)} and {\rm (G$_n$)} for $a=n$. Further, {\rm (F$_n$)} and {\rm (G$_n$)} for $a\in\{1,\ldots,n-1\}$ are ensured from {\rm (F$_{n-1}$)}, {\rm (G$_{n-1}$)}, and {\rm (D$_n$)}, provided that $\epsilon_n$ is chosen small enough. Up to taking any nowhere-vanishing holomorphic $1$-form $\vartheta_n$ in $\Ncal_n$ satisfying {\rm (B$_n$)}, this closes the induction and concludes the construction of the sequence $\{\Xi_n\}_{n\in\n}$.

Denote by $\Mcal$ the open Riemman surface $\cup_{n\in\n}\Ucal_n$; observe that properties {\rm (A$_n$)}, $n\in\n$, imply Theorem \ref{th:embe}-{\rm (i)}. The sequence $\{X_n\colon\overline{\Ucal}_n\to\c^2\}_{n\in\n}$ converges uniformly on compact sets of $\Mcal$ to a holomorphic map 
\[
Y\colon\Mcal\to\c^2;
\]
just observe that properties {\rm (B$_n$)}, {\rm (C.1$_n$)}, and {\rm (D$_n$)} guarantee that 
\begin{equation}\label{eq:co}
\|X_n-X_{n-1}\|_{1,\overline{\Ucal}_k;\vartheta_k}<\epsilon_n<\varepsilon/2^{n+1}\quad\text{for any $k<n$}.
\end{equation}

Let us show that the map $Y$ satisfies all the requirements in the theorem.

\noindent$\bullet$ $Y$ is an injective immersion. Indeed, for every $k\in\n$, \eqref{eq:co} and {\rm (C.1$_n$)}, $n>k$, give that
\begin{equation}\label{eq:co'}
\|Y-X_k\|_{1,\overline{\Ucal}_k;\vartheta_k}\leq \sum_{n>k}\|X_n-X_{n-1}\|_{1,\overline{\Ucal}_k;\vartheta_k} < \sum_{n>k}\epsilon_n < 2\epsilon_{k+1}<\epsilon_k.
\end{equation}
This and  {\rm (C.2$_n$)} ensure that $Y|_{\overline{\Ucal}_k}\colon \overline{\Ucal}_k\to\c^2$ is an embedding for all $k\in\n$, hence $Y$ is an injective immersion as claimed.

\noindent$\bullet$ $Y$ is complete. Indeed, from  {\rm (G$_n$)}, $n\in \n,$ and taking limits as $n\to\infty$,  we infer that $\ell(Y(\gamma))\geq {\bf d}(\Dcal^{n-1},\Fr\Dcal^n)-\epsilon_n$ for any Jordan arc $\gamma$ in $\overline{\Ucal}_n$ connecting $b\overline{\Ucal}_{n-1}$ and $b\overline{\Ucal}_n$, for all $n\in\n$. Therefore, if $\alpha\in\Mcal$  is a divergent arc in $\Mcal$  with initial point in $\Rcal=\Ucal_0$, one infers that 
 $\ell(Y(\alpha))\geq \sum_{n\in\n} ({\bf d}(\Dcal^{n-1},\Fr\Dcal^n)-\epsilon_n)=+\infty$; take into account that $\{\overline{\Ucal}_n\}_{n\in \n}$ is an exhaustion by compact sets of $\Mcal,$ the series $\sum_{n\in\n}\epsilon_n$ is convergent (see {\rm (C.1$_n$)}), and $\sum_{n\in\n}{\bf d}(\Dcal^{n-1},\Fr\Dcal^n)$ is divergent (recall that $\{\Dcal^n\}_{n\in \n}$ is ${\bf d}$-proper in $\Bcal$; see Def.\ \ref{def:proper}). This ensures the completeness of $Y$.

\noindent$\bullet$ Item {\rm (ii)} is given by \eqref{eq:co'} for $k=0$ (recall that $\epsilon_0<\varepsilon$).

\noindent$\bullet$ $Y(\Mcal)\subset\Bcal$ and $Y\colon \Mcal \to \Bcal$ is proper. For the first assertion, let $P\in \Mcal$ and take $k\in\n$ such that $P\in \Ucal_k$. From {\rm (E$_n$)} and the Convex Hull Property, $X_n(P)\in \Dcal^n$ for all $n\geq k$. Taking limits as $n\to\infty$, we obtain that $Y(P)\in\overline{\Bcal}$ and so, by the convexity of $\Bcal$ and the Maximum Principle for harmonic functions, $Y(P)\in\Bcal$.

Then, properties {\rm (F$_n$)}, $n\in\n$, and the fact that  $\{ \overline{\Dcal^{n-1}_{-\epsilon_n}} \}_{n \in \n}$ is an exhaustion by compact sets of $\Bcal$ imply that 
\begin{equation}\label{eq:CD1}
Y(\Mcal \setminus \overline{\Ucal}_{k-1}) \subset \Bcal \setminus \Dcal^{k-1}_{-\epsilon_k}\quad \text{for all $k\in\n$.}
\end{equation}
This inclusion for $k=1$ proves {\rm (iv)}. To check that $Y\colon \Mcal \to \Bcal$ is proper, let $K\subset \Bcal$ be a compact subset. Since  $\{ \overline{\Dcal^{n-1}_{-\epsilon_n}} \}_{n \in \n}$ is an exhaustion of $\Bcal$, there exists $k\in \n$ such that $K\subset  \Dcal^{n-1}_{-\epsilon_n}$ for all $n\geq k.$ Therefore, \eqref{eq:CD1} gives that $Y^{-1}(K) \subset \overline{\Ucal}_{k-1}.$ This shows that $Y^{-1}(K)$ is compact and proves {\rm (iii)}.

This completes the proof.
\end{proof}

%%%%%%%%%%%%%%%%%
%%%%%%%%%%%%%%%%%
%%%%%%%%%%%%%%%%%

\section{Approximation by embedded complex curves}\label{sec:lemma-proof}

In this section we prove Lemma \ref{lem:compilation}. The proof consists of three main steps. In the first step (Subsec.\ \ref{subsec:net}), we introduce the notion of {\em tangent net} for a convex domain, and prove an existence result of tangent nets with useful geometrical properties. The second step is an approximation result by complex curves along tangent nets; see Subsec.\ \ref{subsec:lemma}. In the final step we prove a desingularization result for complex curves in $\c^2$; see Subsec.\ \ref{subsec:embe}. Lemma \ref{lem:compilation} will follow by combining these results; see Subsec.\ \ref{subsec:proofcompi}.

%%%%%%%%%%%%%%%%%
%%%%%%%%%%%%%%%%%
%%%%%%%%%%%%%%%%%

\subsection{Tangent nets}\label{subsec:net}

The aim of this section is to introduce the notion of {\em tangent net} (Def.\ \ref{def:net}) and prove an existence result of tangent nets with useful properties for our purposes (see Lemma \ref{lem:net}).

%Let $\Dcal$ be a regular strictly convex bounded domain in $\r^n.$ For any point $p\in\Fr\Dcal$, there exists ${\bf u}_p \in (T_p\Fr\Dcal)\cap\s^3$ such that
%\begin{equation}\label{eq:CxR}
%\lP {\bf u}_p, \nu_\Dcal(p) \rP=0 \quad\text{and}\quad T_p\Fr\Dcal = {\rm span}_\c ({\bf u}_p) \oplus {\rm span}_\r(J(\nu_\Dcal(p))),
%\end{equation}
%where $J\colon \r^n\to\r^n$, $J(z_1,z_2)=\imath(z_1,z_2)$, is the canonical complex structure on $\r^n$ (recall that we denote by $\lP\cdot,\cdot\rP$ the usual Hermitian inner product in $\r^n$).

\begin{defin}\label{def:net}
Let $\Dcal$ be a bounded regular strictly convex domain in $\r^n,$ $n\geq 2$. Let $\Delta \subset \Fr \Dcal$ be a finite set 
and call
\[
\Gamma:=\bigcup_{p\in \Delta} (p+T_p\, \Fr\Dcal)\subset \r^n\setminus \Dcal.
\]

The set
\[
\Tcal:=\{q\in \r^n\colon \dist(q,\Gamma)<\epsilon\}%\Subset\r^n
\]
is said to be a {\em tangent net} of radius $\epsilon>0$ for $\Dcal.$ (See Fig.\ \ref{fig:net}.)  Observe that if $\epsilon<1/\kappa(\Dcal)$ then $\Tcal\subset \r^n\setminus \overline{\Dcal}_{-\epsilon}$.

The sets $\Tcal^0:=\Delta$ and $\Tcal^1:=\Gamma$ are said to be the $0$-{\em skeleton}  and the $1$-{\em skeleton} of $\Tcal,$ respectively. 
For any $p\in\Tcal^0,$ the set $\Tcal(p):=\{q\in\r^n\colon \dist(q,p+T_p\Fr\Dcal)<\epsilon\}$ 
is said to be the {\em slab of $\Tcal$ based at $p$}.
\end{defin}

\begin{figure}[ht]
    \begin{center}
    \scalebox{0.35}{\includegraphics{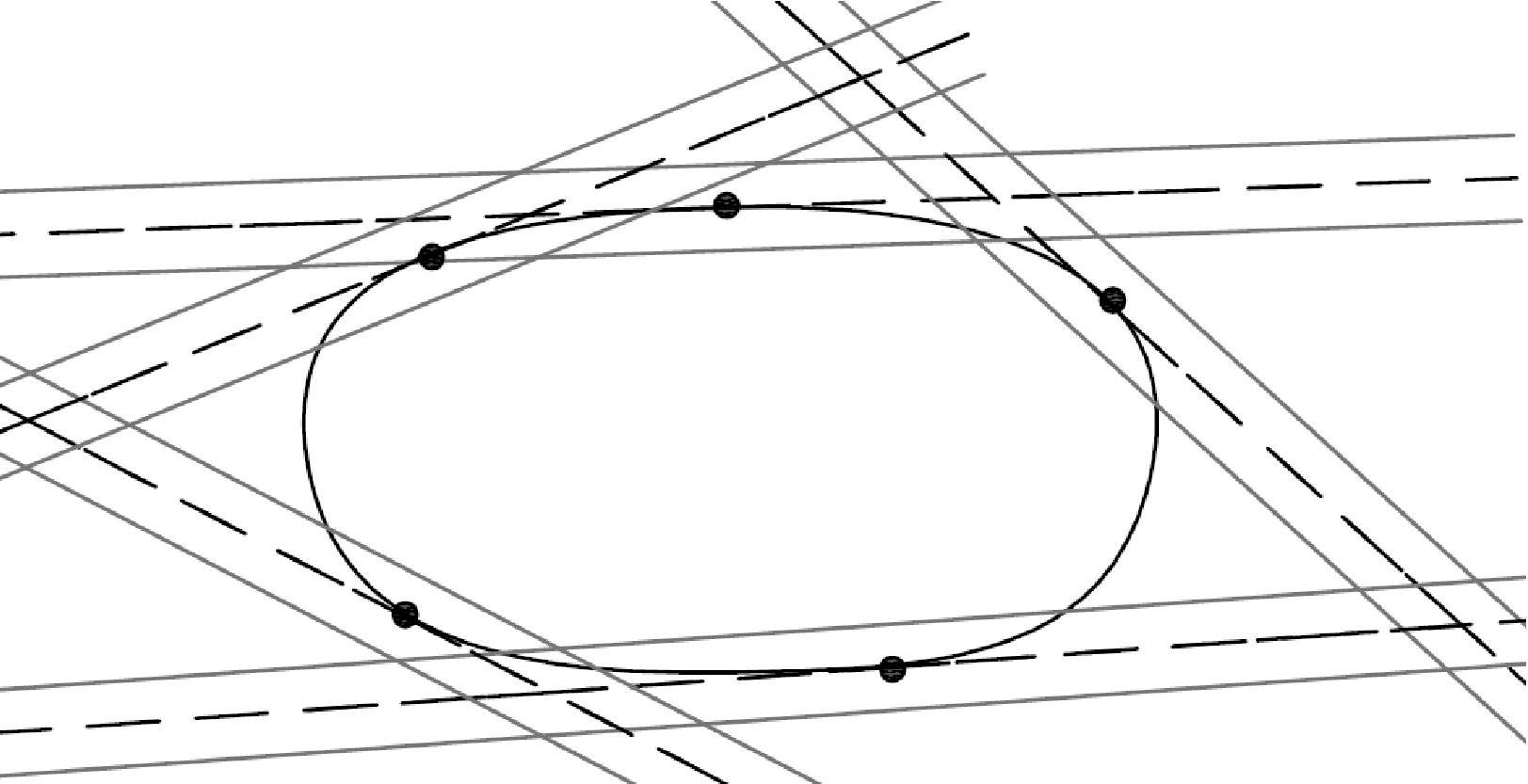}}
        \end{center}
%        \vspace{-0.5cm}
\caption{A tangent net}\label{fig:net}
\end{figure}
 
The following {\em Pythagoras' type} result will be crucial in this paper.  

\begin{lem}\label{lem:net} 
Let $\Dcal$ and $\Dcal'$ be bounded regular strictly convex domains in $\r^n$ ($n\geq 2$), $\Dcal\Subset\Dcal'$. Let $A\subset\Fr\Dcal$ consisting of a finite collection of smooth immersed compact arcs and closed curves.

Then for any $\epsilon>0$ there exists a tangent net $\Tcal$ of radius $<\epsilon$ for $\Dcal$ such that 
\begin{enumerate}[\rm (i)]
\item $A\subset \Tcal$ and
\item $\ell(\gamma)> {\bf d}(\Dcal,\Fr\Dcal')-\epsilon$ for any Jordan arc $\gamma\subset\Tcal$ connecting $\Fr\Dcal$ and $\Fr\Dcal'$.

\end{enumerate}

\end{lem}

\begin{proof}
For the sake of simplicity, denote by $d_0:=\dist(\Dcal,\Fr\Dcal')$ and $\kappa_0:=\kappa(\Dcal)$.

Write $A=\cup_{i=1}^\mu \alpha_i$, where $\alpha_i$ is either a smooth closed immersed curve or a smooth immersed compact arc in $\Fr\Dcal$ for all $i\in\{1,\ldots,\mu\}$, $\mu\in\n$. Denote by 
\begin{equation}\label{eq:L}
\Lgot:=1+\max\{\ell(\alpha_i)\colon i=1,\ldots,\mu\}<+\infty.
\end{equation}

For any $m\in\n$ we set 
\begin{equation}\label{eq:epsilon_m}
\epsilon_m:= \frac{1}{\kappa_0} \Big(1-\cos\Big(\frac{\Lgot \kappa_0}{m}\Big)\Big).
\end{equation}
Since $\lim_{m\to\infty} m\epsilon_m=0$, then
\begin{equation}\label{eq:epsilon_m2}
\max\left\{\epsilon_m \,,\,\frac{4 (m \mu+1) \epsilon_m}{\sqrt{(d_0\kappa_0+1)^2-1}} \right\}<\epsilon
\end{equation}
for large enough $m.$ 

Let $m\in \n$ satisfying \eqref{eq:epsilon_m2} and call $I:=\{1,\ldots,\mu\}\times \{1,\ldots,m\}$.

From \eqref{eq:L}, for any $i\in\{1,\ldots,\mu\}$ there exist $m$ points $p_{i,1},\ldots p_{i,m}$ splitting $\alpha_i$ into $m$ arcs of the same length $<\Lgot/m$. Denote by
$\Delta:=\{p_{i,j}\colon (i,j)\in I\}$, let $\Tcal$ be the tangent net of radius $\epsilon_m$ for $\Dcal$ with $0$-skeleton $\Tcal^0=\Delta$, and observe that 
\begin{equation} \label{eq:t}
\text{$\dist_{\Fr \Dcal}(q,\Tcal^0)<\Lgot/m$ for all $q\in A$,}
\end{equation}
where $\dist_{\Fr \Dcal}$ is the intrinsic distance in $\Fr \Dcal$.

Let us show that $\Tcal$ solves the lemma. 
 
First, let us check  item {\rm (i)}. In view of \eqref{eq:t}, it suffices to check that the slab $\Tcal(p_{i,j})$
contains the intrinsic geodesic ball in $\Fr\Dcal$ with center $p_{i,j}$ and radius $\Lgot/m$, for all $(i,j)\in I$. Indeed, let $S_{i,j}\subset\overline{\Dcal}$ denote the Euclidean sphere in $\r^n$ of radius $1/\kappa_0$ tangent to $\Fr\Dcal$ at $p_{i,j}$. Basic trigonometry and \eqref{eq:epsilon_m} give that $\Tcal(p_{i,j})$ contains the intrinsic geodesic ball in $S_{i,j}$ with center $p_{i,j}$ and radius $\Lgot/m$. Then the assertion follows from Rauch's theorem and the definition of $\kappa_0$ (see \eqref{eq:kD}).

Let us show that $\Tcal$ satisfies item {\rm (ii)}. Let $\gamma\subset\Tcal$ be as in {\rm (ii)} and denote by $p_0\in \Fr\Dcal$ and $q_0\in \Fr\Dcal'$ the endpoints of $\gamma$. Without loss of generality, assume that $\gamma\subset\Tcal\cap\overline{\Dcal'}$. Let $C$ be the cone in $\r^n$ given by
\[
C:=\bigcup_{x\in \Lambda} [x,q_0],\quad \text{where $\Lambda:={\{x\in \Fr\Dcal \colon q_0\in x+T_x\Fr\Dcal\}}.$}
\]
Denote by $\Omega$ the compact region in  $\r^n\setminus \Dcal$ bounded by $\Fr\Dcal$ and $C$; see Figure \ref{fig:omega0}. 
\begin{figure}[ht]
    \begin{center}
    \scalebox{0.26}{\includegraphics{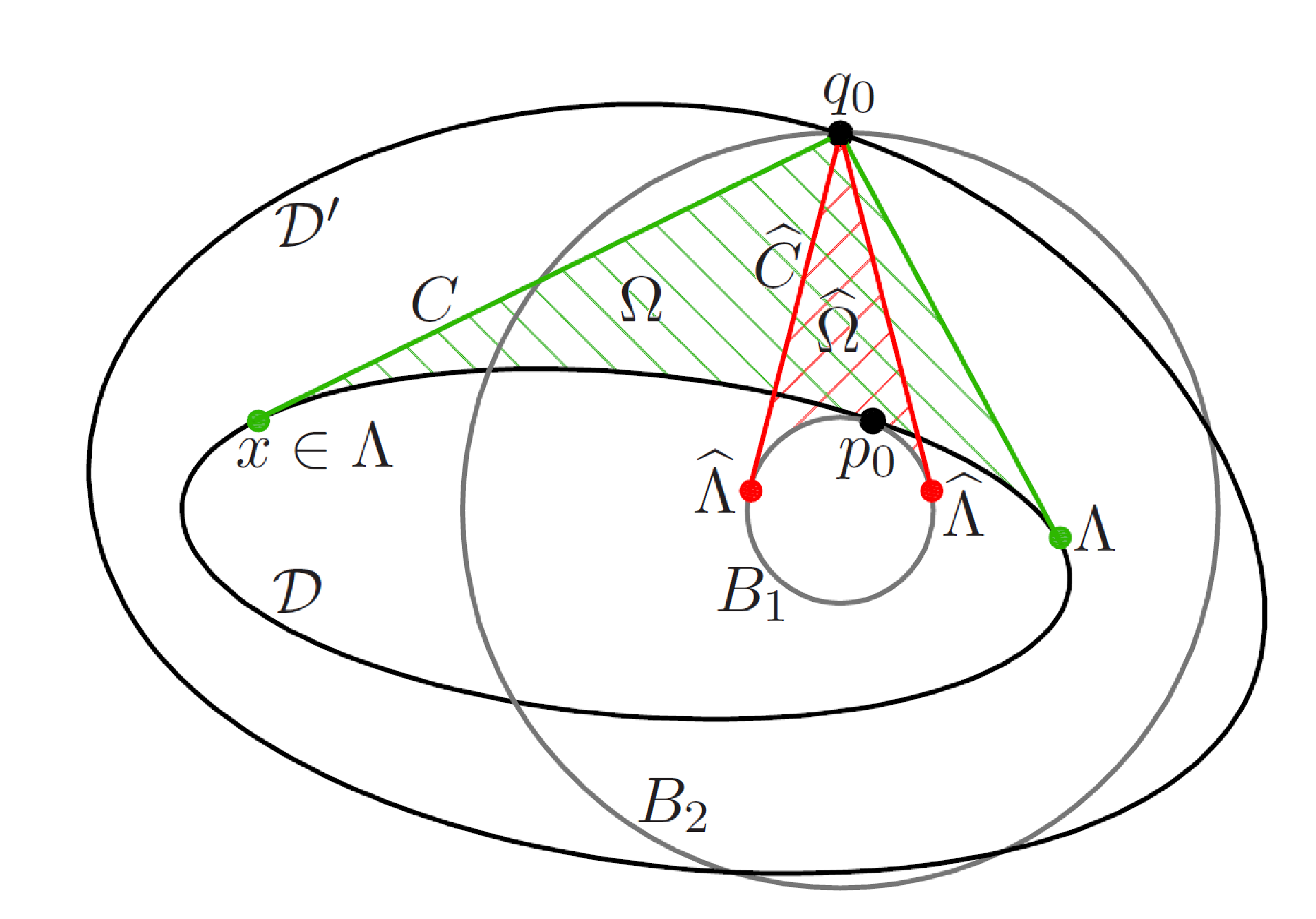}}
        \end{center}
%        \vspace{-0.5cm}
\caption{Proof of Lemma \ref{lem:net}}\label{fig:omega0}
\end{figure}

Assume first that $p_0\in \Fr \Dcal \setminus \Fr\Omega$. In this case there exists $x_0\in \Lambda$ such that $\ell(\gamma) \geq \ell([x_0,q_0])$. Since $\Dcal$ and $\Dcal'$ are strictly convex, then the definition of $\kappa_0$ and Pithagoras' theorem give that 
\[
\ell([x_0,q_0]) \geq  \sqrt{d_0^2+2\frac{d_0}{\kappa_0}} > {\bf d}(\Dcal,\Fr\Dcal'),
\]
and we are done; the latter inequality follows from a straightforward computation. 

Assume now that $p_0\in \Fr\Dcal \cap \Fr \Omega$. 
Let $B_1\subset\Dcal$ be the Euclidean open ball in $\r^n$ of radius $1/\kappa_0$ tangent to $\Fr\Dcal$ at $p_0$. Let $B_2$ be the Euclidean open ball in $\r^n$ with the same center as $B_1$ and such that $q_0\in \Fr B_2$. Denote by
\begin{equation} \label{eq:lambda}
\wh \Lambda :=\{x\in \Fr B_1 \colon q_0\in x+T_x \Fr B_1\},\enskip \wh C:=\bigcup_{x\in \wh \Lambda} [x,q_0],\enskip\text{and}\enskip \wh \Tcal:=\Tcal \cup\wh C.
\end{equation} 
Denote by $\wh \Omega$ the compact region in  $\r^n\setminus B_1$ bounded by $\Fr B_1$ and $\wh C$, and notice that $\wh\Omega\subset\overline{\Dcal'}$; see Figure \ref{fig:omega0}. 
Since $[p_0,q_0]\cap \overline{\Dcal}=\{p_0\}$ and $p_0\in \overline{B}_1 \subset \overline{\Dcal},$ then  $[p_0,q_0]\cap \overline{B}_1=\{p_0\}$ as well, and so $p_0 \in \Fr \Dcal \cap \Fr \wh \Omega$.

If $\gamma\cap (\r^n\setminus\wh\Omega)\neq\emptyset$, let $p_1$ be the first  point of $\gamma$  in $\wh C$ and let $\gamma_0\subset \gamma\cap \wh \Omega$ be the sub-arc of $\gamma$ with endpoints $p_0$ and $p_1.$ Observe that the arc $\wh \gamma_0:= \gamma_0 \cup [p_1,q_0]\subset \wh \Tcal\cap\wh\Omega$ connects $p_0$ and $q_0$ and satisfies $\ell(\wh \gamma_0)\leq\ell(\gamma)$. 
Therefore, to finish the proof it suffices to show that $\ell(\wh\gamma) >{\bf d}(\Dcal,\Fr \Dcal')-\epsilon$ for any compact arc $\wh\gamma\subset\wh\Tcal\cap\wh\Omega$ with endpoints $p_0$ and $q_0$. Let $\wh \gamma$ be such an arc.

Up to a rigid motion, assume that $B_1$ and $B_2$ are centered at $\vec 0\in\r^n$ and $q_0=(\vec 0,r_2)\in\r^{n-1}\times \r$, where $r_2$ is the radius of $B_2$. Since the radius of $B_1$ equals $1/\kappa_0$, $p_0\in \Fr B_1\cap \Fr \Dcal$, and $q_0\in \Fr B_2\cap \Fr \Dcal'$, it follows that
\begin{equation}\label{eq:r_2}
r_2\geq d_0+1/\kappa_0.
\end{equation}
In this setting, the set $\wh\Lambda$ in \eqref{eq:lambda} is
\begin{equation}\label{eq:altura}
\wh \Lambda=\left\{\left(\vec x\,,\,\frac{1}{r_2\kappa_0^2}\right) \in \r^{n-1}\times\r\colon \|\vec x\|=\frac{\sqrt{r_2^2\kappa_0^2-1}}{r_2 \kappa_0^2}\right\}.
\end{equation}

Since the endpoint $q_0$ of $\wh \gamma$ is the vertex of the cone $\wh C$ (see \eqref{eq:lambda}), then there exist $a\in\n$ satisfying 
\begin{equation} \label{eq:a-1}
a-1\leq \sharp I=m\mu,
\end{equation}
a compact polygonal arc $\beta=\cup_{i=1}^a L_i \subset \wh\Tcal\cap\wh\Omega$
 with endpoints $p_0$ and $q_0$, and an injective map $\{1,\ldots,a-1\}\ni i\mapsto \sigma_i\in I$, such that:
\begin{itemize}
\item $L_i=[(\vec x_i,y_i),(\vec x_{i+1},y_{i+1})]\subset \r^{n-1}\times \r,$ $i=1,\ldots,a$.
\item $(\vec x_1,y_1)=p_0$ and $(\vec x_{a+1},y_{a+1})=q_0=(\vec 0,r_2)$ in $\r^{n-1}\times\r$.
\item $L_i\subset \Tcal_{\sigma_i}$ for all $i=1,\ldots,a-1$.
\item $L_a\subset\wh C$ (possibly $L_a=\{q_0\}$).
\item $\ell(\beta)\leq\ell(\wh\gamma)$.
\end{itemize}

To finish it suffices to show that $\ell(\beta)> {\bf d}(\Dcal,\Fr\Dcal)-\epsilon$. 

Since $\Tcal$ is a tangent net of radius $\epsilon_m$ for $\Dcal$ and the slope of any segment in $\Tcal^1\cap\wh\Omega$ is at most the one of the cone $\wh C$ (that is to say, the slope of the segment  $[q_1,q_0]$ over $\r^{n-1}\times\{0\}$ for any $q_1=(\vec x_{q_1},y_{q_1})\in \wh \Lambda$, which equals $(r_2-y_{q_1})/\|\vec{x}_{q_1}\|=\sqrt{r_2^2\kappa_0^2-1}$), then basic trigonometry gives that
\begin{equation}\label{eq:slope}
h_i:=|y_{i+1}-y_i|\leq \|\vec x_{i+1}-\vec x_i\|\sqrt{r_2^2\kappa_0^2-1}+2r_2\kappa_0\epsilon_m \quad \forall i\in\{1,\ldots,a\}.
\end{equation}

Since $(\vec x_1,y_1)=p_0 \in \Fr B_1=\s^{n-1}(1/\kappa_0)$ then $y_1\leq 1/\kappa_0$; and since  $y_{a+1}=r_2$, then
\begin{equation}\label{eq:y}
\sum_{i=1}^a h_i \geq r_2-1/\kappa_0.
\end{equation}

From \eqref{eq:slope}, one obtains that
\begin{equation}\label{eq:F-G}
\ell(\beta) = \sum_{i=1}^a \ell(L_i) = \sum_{i=1}^a \sqrt{\|\vec x_{i+1}-\vec x_i\|^2+h_i^2} \geq  F-G,
\end{equation}
where
\[
F=\frac{r_2 \kappa_0}{\sqrt{r_2^2 \kappa_0^2-1}}\sum_{i=1}^a h_i
\]
and
\[
G=\frac{r_2 \kappa_0}{\sqrt{r_2^2 \kappa_0^2-1}}\sum_{i=1}^a \Big(h_i-\frac{1}{r_2\kappa_0}\sqrt{(h_i-2\epsilon_mr_2\kappa_0)^2+(r_2^2\kappa_0^2-1)h_i^2}\Big).
\]

On the one hand, since the function 
\[
f\colon ]\frac{1}{\kappa_0},+\infty[\,\to\, ]0,+\infty[,\quad 
f(t)=\frac{t^2 \kappa_0-t}{\sqrt{t^2 \kappa_0^2-1}},
\]
is increasing, one infers from \eqref{eq:y} and  \eqref{eq:r_2} that 
\begin{equation}\label{eq:F}
 F\geq \frac{r_2^2 \kappa_0-r_2}{\sqrt{r_2^2 \kappa_0^2-1}} = f(r_2) \geq  f(d_0+1/\kappa_0) =  {\bf d}(\Dcal,\Fr \Dcal').
\end{equation}
On the other hand, one has
\begin{eqnarray*}
G & = & \frac{r_2 \kappa_0}{\sqrt{r_2^2 \kappa_0^2-1}}\sum_{i=1}^a \frac{-4\epsilon_m^2+\frac{4\epsilon_m}{r_2\kappa_0} h_i}{h_i+\frac{1}{r_2\kappa_0}\sqrt{(h_i-2\epsilon_mr_2\kappa_0)^2+(r_2^2\kappa_0^2-1)h_i^2}}\\
& < & \frac{4\epsilon_m}{\sqrt{r_2^2 \kappa_0^2-1}}  \sum_{i=1}^a \frac{h_i}{h_i+\frac{1}{r_2\kappa_0}\sqrt{(h_i-2\epsilon_mr_2\kappa_0)^2+(r_2^2\kappa_0^2-1)h_i^2}}\\
&\leq & \frac{4\epsilon_m a}{\sqrt{r_2^2 \kappa_0^2-1}}.
\end{eqnarray*}
Therefore, taking into account  \eqref{eq:r_2}, \eqref{eq:a-1}, and \eqref{eq:epsilon_m2}, one gets
\[
G < \frac{4 (m\mu+1)\epsilon_m}{\sqrt{(d_0\kappa_0+1)^2-1}} <\epsilon.
\]
This inequality, \eqref{eq:F-G}, and \eqref{eq:F} prove the lemma.
\end{proof}

%%%%%%%%%%%%%%%%%
%%%%%%%%%%%%%%%%%
%%%%%%%%%%%%%%%%%

\subsection{Deforming curves along tangent nets}\label{subsec:lemma}

The following approximation result is the second key in the proof of Lemma \ref{lem:compilation}. See Def.\ \ref{def:net} for notation.

\begin{lem}\label{lem:main}
Let $\Dcal$ and $\Dcal'$ be bounded regular strictly convex domains in $\c^2$, $\Dcal\Subset\Dcal'$. Let $\varepsilon\in ]0,\min \{\dist(\overline{\Dcal},\Fr\Dcal'),1/\kappa(\Dcal)\}[$ and let $\Tcal$ be a tangent net of radius $\varepsilon$ for $\Dcal$.

Let $\delta\in]0,\varepsilon[$, let $\Ncal$ be an open connected Riemann surface equipped with a nowhere-vanishing holomorphic $1$-form $\vartheta_{\Ncal}$, let $\Rcal\Subset\Ncal$ be a bordered domain, and let $X\colon \overline{\Rcal}\to \c^2$ be a holomorphic immersion such that %$X(\Rcal)\subset \Dcal$ and
\begin{equation}\label{eq:lemma}
X(b\overline{\Rcal})\subset \Tcal\cap \Dcal_{\delta} \quad \text{(hence $X(\overline{\Rcal})\subset \Dcal_{\delta}$).}
\end{equation}

Then there exist a bordered domain $\Scal\Subset\Ncal$ and a holomorphic immersion $Y\colon \overline{\Scal}\to\c^2$ enjoying the following properties:
\begin{enumerate}[\rm (a)]
\item $\Rcal\Subset \Scal$ and $\Rcal$ and $\Scal$ are homeomorphically isotopic (i.e., $\overline{\Scal}\setminus\Rcal$ consists of a finite collection of pairwise disjoint compact annuli).
\item $\|Y-X\|_{1,\overline{\Rcal};\vartheta_\Ncal}<\delta$ (see \eqref{eq:norma1}).
\item $Y(\overline{\Scal}\setminus {\Rcal})\subset \overline{\Dcal'}\setminus \overline{\Dcal}_{-\varepsilon}$.
\item $Y(b\overline{\Scal})\subset \Fr\Dcal',$ hence $Y(\overline{\Scal})\subset \overline{\Dcal'}$.
\item $Y(\overline{\Scal})  \subset \Dcal_\delta \cup \Tcal$.
\end{enumerate}
\end{lem}
Before going into the proof of Lemma \ref{lem:main}, let us say a word about its geometrical implications. 
Roughly speaking, the lemma ensures that an immersed compact complex curve $X(\overline{\Rcal})\subset\c^2$ with boundary $X(b\overline{\Rcal})$ lying close to the frontier of a regular strictly convex domain $\Dcal\Subset\c^2$, can be approximated by another one $Y(\overline{\Scal})\subset\c^2$ with boundary $Y(b\overline{\Scal})$ in the frontier of a larger convex domain $\Dcal'$. The main point is that this can be done in such a way that the piece of $Y(\overline{\Scal})$ outside $\Dcal$ lies in a given tangent net $\Tcal$ for $\Dcal$ containing $X(b \overline{\Rcal})$; see Lemma \ref{lem:main}-{\rm (e)}.

Notice that the intrinsic Euclidean diameter of the complex curve $Y \colon \Scal \to \c^2$ exceeds in $\dist(\Dcal,\Fr\Dcal')$ the one of $X\colon\Rcal\to\c^2$. Combining this lemma with a suitable choice of $\Tcal$ accordingly to Lemma \ref{lem:net}, one can also guarantee that the {\em image diameter} of the curve $Y$ exceeds in ${\bf d}(\Dcal,\Fr\Dcal')$ the one of the initial curve $X$ (see Def.\ \ref{def:d} and \ref{def:extrinsic}). 
This fact will be the key for obtaining image completeness  while preserving boundedness in the proof of Theorem \ref{th:intro2} (Sec.\ \ref{sec:extrinsic}). The main novelty of Lemma \ref{lem:main} with respect to previous related constructions (cf. \cite{Nadirashvili,AL-CY,AF-1} and references therein) is to estimate the image diameter of the curve instead of the intrinsic one.

From the technical point of view, the proof of the lemma relies on approximating $X(\overline{\Rcal})$ by another immersed curve $\overline{\Sigma}\subset\c^2$ with boundary $b\overline{\Sigma}$ in $\c^2\setminus\overline{\Dcal'}$, such that $\overline{\Sigma}\subset \Dcal_\delta \cup \Tcal$. Lemma \ref{lem:main} will follow up to trimming off the curve $\overline{\Sigma}$ in order to ensure item {\rm (d)}. 
The construction of the immersed compact complex curve $\overline{\Sigma}$ depends  on the classical Runge and Mergelyan approximation theorems, and consists of three main steps that we now roughly describe. 

First, we split the boundary $b\overline{\Rcal}$ into a finite collection of pairwise disjoint Jordan arcs $\alpha_{i,j}$ so that $X(\alpha_{i,j})$ lies in a slab $\Tcal(p_{i,j})$ of $\Tcal,$ $p_{i,j}\in \Tcal^0$; see items {\rm (i)}-{\rm (iv)} below.  

In the second step (properties {\rm (v)}-{\rm (vii)} below), we attach to $X(\overline{\Rcal})$ a family of Jordan arcs $\lambda_{i,j}\subset\c^2$ with initial point at an endpoint of $X(\alpha_{i,j})\subset X(b\overline{\Rcal})$ and final point in $\c^2\setminus\overline{\Dcal'}$.  Each $\lambda_{i,j}$ is chosen to be close to a segment inside the slab $\Tcal(p_{i,j})$. We then approximate $X(\overline{\Rcal})\cup(\cup_{i,j}\lambda_{i,j})$ by a new curve $F(\overline{\Mcal})$, $\Rcal\Subset\Mcal\Subset\Ncal$ (see properties {\rm (viii)}-{\rm (xiii)} below). The bordered domain $\Mcal$ is chosen so that the final point of $r_{i,j}=F^{-1}(\lambda_{i,j})$ lies in $b \overline{\Mcal}.$

In the final step, we first split the boundary $b\overline{\Mcal}$ into finitely many arcs $\beta_{i,j}$ coordinately to the $\alpha_{i,j}$'s and the $r_{i,j}$'s (properties {\rm (xiv)}-{\rm (xvi)} below). 
The arcs $r_{i,j}$'s split $\Mcal\setminus \overline{\Rcal}$ into a finite collection of topological discs $\Agot_{i,j},$ where $\alpha_{i,j} \cup \beta_{i,j} \subset \Fr \overline{\Agot}_{i,j}$. Then, we {\em stretch}  $F(\Agot_{i,j})$  outside of $\overline{\Dcal'}$ along the slab $\Tcal(p_{i,j})$ in a complex direction {\em orthogonal} to $\lambda_{i,j},$ hence preserving the already done in the second step. This gives a curve $\overline{\Sigma}$ as the one announced above ($\overline{\Sigma}$ corresponds to $Y_{n}(\overline{\Mcal})$ for $n={\tt I}{\tt J},$ see properties {\rm (1$_n$)}-{\rm (6$_n$)} below).

\begin{proof}[Proof of Lemma \ref{lem:main}]
Recall that $\lP\cdot,\cdot\rP$ denotes the bilinear Hermitian product of $\c^2$ and $\nu_\Dcal\colon \Fr \Dcal \rightarrow \s^3$ the
outward pointing unit normal of $\Fr \Dcal$. Denote by $\Jscr\colon\c^2\to\c^2$, $\Jscr(\zeta,\xi)=(\imath\zeta,\imath\xi)$, the canonical complex structure of $\c^2$.

We begin with the following reduction. Since $\nu_\Dcal\colon \Fr \Dcal \rightarrow \s^3$ is a diffeomorphism,  we can assume without loss of generality that 
\begin{equation}\label{eq:net0}
\lP \nu_\Dcal(p_1) \rP^\bot \cap \lP \nu_\Dcal(p_2) \rP^\bot=\{0\} \quad \forall \{p_1, p_2\}\subset \Tcal^0,\, p_1\neq p_2.
\end{equation}
Indeed,  just replace $\Tcal$ by another tangent net $\wh \Tcal$ for $\Dcal$ satisfying   $X(b\overline{\Rcal})\subset \wh \Tcal$, $\wh \Tcal\cap\overline{\Dcal'}\subset\Tcal\cap\overline{\Dcal'}$, and \eqref{eq:net0}. To do so, choose $\wh\Tcal$ with $0$-skeleton and radius ($<\varepsilon$) close enough to the ones of $\Tcal$ and use the fact that  condition \eqref{eq:net0} determines and open and dense subset in the space of tangent nets for $\Dcal$.

Since $\lP \nu_\Dcal(p) \rP^\bot= T_{p}\Fr\Dcal \cap \Jscr(T_{p}\Fr\Dcal)$ for all $p \in \Fr \Dcal$, equation \eqref{eq:net0} yields that $(T_{p_1}\Fr\Dcal \cap T_{p_2}\Fr\Dcal) \setminus (\lP \nu_\Dcal(p_1) \rP^\bot\cup \lP \nu_\Dcal(p_2) \rP^\bot)\neq\emptyset$  for any couple $\{p_1, p_2\}\subset \Tcal^0$, $p_1\neq p_2$. For every couple $\{p_1, p_2\}\subset \Tcal^0$, $p_1\neq p_2$, fix  
\begin{equation}\label{eq:vp1p2}
{\sf v}_{\{p_1,p_2\}}\in \big( (T_{p_1}\Fr\Dcal \cap T_{p_2}\Fr\Dcal) \setminus (\lP \nu_\Dcal(p_1) \rP^\bot\cup \lP \nu_\Dcal(p_2) \rP^\bot)\big) \cap\s^3. 
\end{equation}

The {\em first step} of the proof consists of suitably splitting the boundary curves of $\overline{\Rcal}$.  
Denote by $\alpha_1,\ldots,\alpha_{\tt I}$, ${\tt I}\in\n$, the connected components of $b\overline{\Rcal}$, which are smooth Jordan curves in $\Ncal$. From \eqref{eq:lemma}, there exist a natural number ${\tt J} \geq 3,$  a family of Jordan sub-arcs $\{\alpha_{i,j}\subset\alpha_i \colon (i,j)\in \Hgot:=\{1,\ldots,{\tt I}\}\times \z_{\tt J}\}$ (here $\z_{\tt J}=\{0,1,\ldots,{\tt J}-1\}$  denotes the additive cyclic group of integers modulus ${\tt J}$), and points $\{p_{i,j} \colon (i,j)\in \Hgot\}\subset\Tcal^0$, meeting the following requirements:
\newcounter{saveenum}
\begin{enumerate}[\rm (i)]
\item $\cup_{j=1}^{{\tt J}} \alpha_{i,j}= \alpha_i,$ for all $i\in\{1,\ldots,{\tt I}\}$.
\item $\alpha_{i,j}\cap\alpha_{i,k}=\emptyset$ for all $(i,j)\in \Hgot$ and $k\in\z_{\tt J}\setminus\{j-1,j,j+1\}$.
\item $\alpha_{i,j}$ and $\alpha_{i,j+1}$ have a common endpoint $Q_{i,j}$ and are otherwise disjoint for all  $(i,j)\in \Hgot$. 
\item $X(\alpha_{i,j})  \subset \Tcal(p_{i,j})\cap \Dcal_{\delta}$ for all  $(i,j)\in \Hgot$, where 
$\Tcal(p_{i,j})$ is the slab of  $\Tcal$ based at $p_{i,j}\in\Tcal^0$ (see Def.\ \ref{def:net}). 
\setcounter{saveenum}{\value{enumi}}
\end{enumerate}
To find such a partition, choose the arcs $\alpha_{i,j}$ so that $X(\alpha_{i,j})\subset\c^2$  has sufficiently small diameter  for all $(i,j)\in \Hgot$. Take into account \eqref{eq:lemma} in order to ensure {\rm (iv)}. Notice that the map $\Hgot \ni (i,j) \mapsto p_{i,j}\in \Tcal^0$ is not necessarily either injective or surjective.

In the {\em second step} we attach to $X(\overline{\Rcal})$ a suitable family of Jordan arcs. In the Riemann surface $\Ncal,$ take  for every $(i,j)\in \Hgot$ an analytic Jordan arc $r_{i,j}\subset \Ncal\setminus \Rcal$  attached transversally to $b\overline{\Rcal}$ at $Q_{i,j}$ and otherwise disjoint from $\overline{\Rcal}$. In addition, choose these arcs to be pairwise disjoint. Denote by $P_{i,j}$ the other endpoint of $r_{i,j}$, $(i,j)\in \Hgot$. 

For every $(i,j)\in \Hgot$, there exists a smooth regular embedded arc $\lambda_{i,j}$ in $\c^2$ enjoying the following properties:
\begin{enumerate}[\rm (i)]
\setcounter{enumi}{\value{saveenum}}
\item $\lambda_{i,j}\subset \Tcal(p_{i,j})\cap \Tcal(p_{i,j+1})$. In particular,  $\lambda_{i,j}+T_{p_{i,k}}\Fr\Dcal:=\cup_{q\in \lambda_{i,j}} (q+T_{p_{i,k}}\Fr\Dcal) \subset \Tcal(p_{i,k}) \subset\Tcal$ for $k=j,j+1$.
\item $\lambda_{i,j}$ is attached transversally to $X(b\overline{\Rcal})$ at $X(Q_{i,j})$ and  matches smoothly with $X(\overline{\Rcal})$ at $X(Q_{i,j})$.
\item  $|\langle o_{i,j}-X(Q_{i,j}),\Jscr(\nu_\Dcal(p_{i,k})) \rangle| >1+{\rm diam}(\Dcal')$, for $k=j,j+1$, where $o_{i,j}$ is the endpoint of $\lambda_{i,j}$,  $o_{i,j}\neq X(Q_{i,j})$ (recall that $\langle \cdot,\cdot\rangle$ denotes the Euclidean inner product).
\setcounter{saveenum}{\value{enumi}}
\end{enumerate}
Indeed, the arc $\lambda_{i,j}$ can be obtained as a slight deformation of the segment 
\[
[X(Q_{i,j}),X(Q_{i,j})+c_{i,j} {\sf v}_{\{p_{i,j},p_{i,j+1}\}}]\subset\c^2,
\]
where ${\sf v}_{\{p_{i,j},p_{i,j+1}\}}$  is given by \eqref{eq:vp1p2}
and $c_{i,j}>0$ is a large enough constant so that the above segment formally meets {\rm (vii)} (notice that $\langle {\sf v}_{\{p_{i,j},p_{i,j+1}\}},\Jscr(\nu_\Dcal(p_{i,k}))\rangle\neq 0,$ $k=j,j+1$; see \eqref{eq:vp1p2}). For item {\rm (v)}, take into account {\rm (iii)}, {\rm (iv)}, and \eqref{eq:vp1p2}. Further, {\rm (vi)} trivially follows up to a slight deformation of the segment.

Extend $X$, with the same name, to a smooth function $\overline{\Rcal}\cup(\cup_{(i,j)\in \Hgot} r_{i,j})\to\c^2$ mapping the arc $r_{i,j}$ diffeomorphically onto $\lambda_{i,j}$ for all $(i,j)\in \Hgot$.
In this setting, Mergelyan's theorem furnishes a bordered domain $\Mcal\Subset\Ncal$ and a holomorphic immersion 
\[
Y_0\colon \overline{\Mcal}\to\c^2,
\]
as close as desired to $X$ in the $\Ccal^1$ topology on $\overline{\Rcal}\cup(\cup_{(i,j)\in \Hgot} r_{i,j})$,   such that:
\begin{enumerate}[\rm (i)]
\setcounter{enumi}{\value{saveenum}}
\item $\Rcal\Subset\Mcal$ and $\overline{\Mcal}\setminus \Rcal$ consists of ${\tt I}$ pairwise disjoint compact annuli $\Agot_1,\ldots,\Agot_{\tt I}$.
\item $\alpha_i\subset\Fr\Agot_i$, $r_{i,j}\subset \Agot_i$, and $r_{i,j}\cap \Fr \Agot_i=\{Q_{i,j},P_{i,j}\}$ for all $(i,j)\in \Hgot$.
%\item $\|{Y_0}-X\|_{0,\overline{\Rcal}\cup(\cup_{(i,j)\in \Hgot} r_{i,j})}<\varepsilon$.
\item $\|{Y_0}-X\|_{1,\overline{\Rcal};\vartheta_\Ncal}<\delta/(1+{\tt I}{\tt J})$, where $\delta>0$ is given in the statement of the lemma.
\item ${Y_0}(r_{i,j})\subset \Tcal(p_{i,j})\cap \Tcal(p_{i,j+1})$ for all $(i,j)\in \Hgot$. See {\rm (v)}.
\item ${Y_0}(\alpha_{i,j})  \subset \Tcal(p_{i,j})\cap \Dcal_\delta$ for all  $(i,j)\in \Hgot$. Take into account {\rm (iv)}.
\item $|\langle {Y_0}(P_{i,j})-{Y_0}(Q_{i,j}),\Jscr(\nu_\Dcal(p_{i,k})) \rangle| >1+{\rm diam}(\Dcal')$, for all $(i,j)\in \Hgot$ and $k\in\{j,j+1\}$. See {\rm (vii)}.
\setcounter{saveenum}{\value{enumi}}
\end{enumerate}

Write $\beta_i=(\Fr \Agot_i)\setminus\alpha_i$ for the  connected component of $\Fr\Agot_i$ disjoint from $\alpha_i$, $i=1,\ldots,{\tt I}$. For every $(i,j)\in \Hgot$ denote by $\Agot_{i,j}$ the connected component of $\Agot_i\setminus \big( \alpha_i \cup (\cup_{j\in\z_{\tt J}} r_{i,j})\big)$ containing $\alpha_{i,j}$ in its frontier. Observe that $\overline{\Agot}_{i,j}$ is a closed disc in $\Agot_i$ bounded by $r_{i,j-1}$, $\alpha_{i,j}$, $r_{i,j}$, and a sub-arc $\beta_{i,j}$ of $\beta_i$ connecting the points $P_{i,j-1}$ and $P_{i,j}$. See Fig.\ \ref{fig:anillo}.
\begin{figure}[ht]
    \begin{center}
    \scalebox{0.35}{\includegraphics{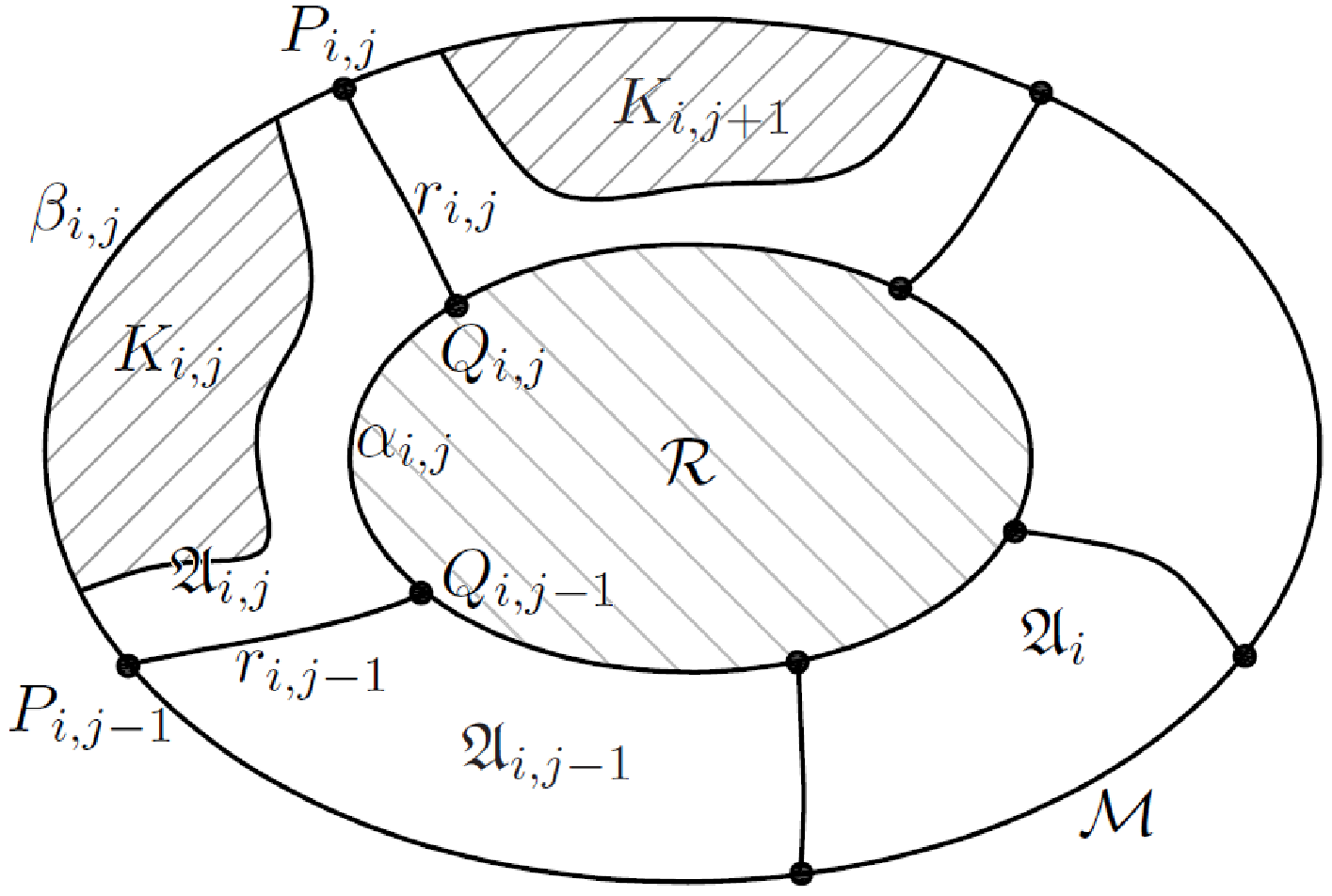}}
        \end{center}
%        \vspace{-0.5cm}
\caption{$\Agot_i$}\label{fig:anillo}
\end{figure}

In the {\em final step} of the construction, we {\em stretch}  $F(\Agot_{i,j})$  outside of $\overline{\Dcal'}$ along the slab $\Tcal(p_{i,j})$.
For every $(i,j)\in \Hgot$, choose a closed disc $K_{i,j}\subset \Agot_{i,j}$ with $\Fr K_{i,j}$ close enough to  $\Fr {\Agot}_{i,j}$ so that:
\begin{enumerate}[\rm (i)]
\setcounter{enumi}{\value{saveenum}}
\item $K_{i,j}\cap\beta_{i,j}$ is a Jordan arc containing neither $P_{i,j-1}$ nor $P_{i,j}$.
\item ${Y_0}(\overline{\Agot_{i,j}\setminus K_{i,j}})\subset \Tcal(p_{i,j})$. Use {\rm (xi)}, {\rm (xii)}, and a continuity argument.
\item $\pi_{i,j}({Y_0}(\overline{\beta_{i,j}\setminus K_{i,j}}))\cap \pi_{i,j}(\overline{\Dcal'})=\emptyset$, where 
\[
\pi_{i,j}\colon \c^2\to {\rm span}_\r(\Jscr(\nu_\Dcal(p_{i,j})))
\]
denotes the orthogonal projection. Use that $\{{Y_0}(Q_{i,j-1}),{Y_0}(Q_{i,j})\}\subset \Dcal'$ (see {\rm (xii)}), property {\rm (xiii)}, and a continuity argument again. See Fig.\ \ref{fig:anillo}.
\setcounter{saveenum}{\value{enumi}}
\end{enumerate}

Let $\sigma\colon \{1,\ldots,{\tt I}{\tt J}\}\to \Hgot$ be a bijective map. To finish, we construct in a recursive process a sequence of holomorphic immersions $Y_n\colon \overline{\Mcal}\to\c^2$, $n\in\{0,1,\ldots,{\tt I}{\tt J}\}$, enjoying the following properties:
\begin{enumerate}[\rm (1{$_n$})]
\item $\|Y_n-Y_{n-1}\|_{1,\overline{\Mcal\setminus \Agot_{\sigma(n)}};\vartheta_\Ncal}<\delta/(1+{\tt I}{\tt J})$.
\item $\lP Y_n-Y_{n-1}, \nu_\Dcal(p_{\sigma(n)}) \rP=0$.
\item $Y_n(\overline{\Agot_{\sigma(a)}\setminus K_{\sigma(a)}})\subset \Tcal_{\sigma(a)}$ for all $a\in \{1,\ldots,{\tt I}{\tt J}\}$.
\item $\pi_{\sigma(a)}(Y_n(\overline{\beta_{\sigma(a)}\setminus K_{\sigma(a)}}))\cap \pi_{\sigma(a)}(\overline{\Dcal'})=\emptyset$ for all $a\in \{1,\ldots,{\tt I}{\tt J}\}$.
%\item $Y_n((\Fr K_{\sigma(a)})\setminus\beta_{\sigma(a)})\subset \Tcal_{\sigma(a)}\setminus\overline{\Dcal'}$.
\item $Y_n(K_{\sigma(a)})\cap \overline{\Dcal'}=\emptyset$ for all $a\in\{1,\ldots,n\}$.
\item $Y_n(\overline{\Rcal})\subset \Dcal_\delta$.
\end{enumerate}

The basis of the induction corresponds to the already given immersion $Y_0.$ Indeed, notice that {\rm (6$_0$)} is implied by {\rm (xii)} and the Convex Hull Property; {\rm (3$_0$)} and {\rm (4$_0$)} agree with {\rm (xv)} and {\rm (xvi)}; and {\rm (1$_0$)}, {\rm (2$_0$)}, and {\rm (5$_0$)} are empty conditions. 

For the inductive step,  assume that we have constructed $Y_m\colon \overline{\Mcal}\to\c^2$ for all $m\in\{0,\ldots,n-1\}$ meeting the above requirements for some $n\in \{1,\ldots, {\tt I}{\tt J}\}$. Let us find an immersion $Y_n$ satisfying properties {\rm (1$_n$)},$\ldots$,{\rm (6$_n$)}.

For the sake of simplicity, write $w_n:=\nu_\Dcal(p_{\sigma(n)}),$ and fix $u_n\in \lP w_n\rP^\bot \cap\s^3\subset T_{p_{\sigma(n)}}\Fr\Dcal$. Since $\{u_n,w_n\}$ is a $\lP\cdot,\cdot\rP$-orthonormal basis of $\c^2$, one has that
\begin{equation}\label{eq:Y_n-1}
Y_{n-1}=\lP Y_{n-1},u_n\rP u_n + \lP Y_{n-1},w_n\rP w_n.
\end{equation}

Recall that  $(\overline{\Mcal\setminus \Agot_{\sigma(n)}})\cap K_{\sigma(n)}=\emptyset$, and consider the holomorphic function $\phi\colon (\overline{\Mcal\setminus \Agot_{\sigma(n)}})\cup K_{\sigma(n)}\to \c$ given by
\begin{equation}\label{eq:phi}
\phi|_{\overline{\Mcal\setminus \Agot_{\sigma(n)}}}= \lP Y_{n-1},u_n\rP|_{\overline{\Mcal\setminus \Agot_{\sigma(n)}}} \quad\text{and}\quad \phi|_{K_{\sigma(n)}}= \zeta_n,
\end{equation}
where $\zeta_n\in\c$ is a constant with modulus large enough so that
\begin{equation}\label{eq:fuera}
(\zeta_nu_n+{\rm span}_\c(w_n)) \cap \overline{\Dcal'}=\emptyset.
\end{equation}
Such constant exists since $\overline{\Dcal'}$ is compact. Since $(\overline{\Mcal\setminus \Agot_{\sigma(n)}})\cup K_{\sigma(n)}$ is a Runge subset of a domain in $\Ncal$ containing $\overline{\Mcal}$, Runge's theorem furnishes a holomorphic function $\varphi\colon \overline{\Mcal}\to\c$ as close to $\phi$ as desired in the $\Ccal^1$ topology on $(\overline{\Mcal\setminus \Agot_{\sigma(n)}})\cup K_{\sigma(n)}$. 

\begin{claim}
If $\varphi$ is chosen close enough to $\phi$ in the $\Ccal^1$ topology on $(\overline{\Mcal\setminus \Agot_{\sigma(n)}})\cup K_{\sigma(n)}$, then the function $Y_n\colon \overline{\Mcal}\to \c^2$ given by
\begin{equation}\label{eq:Y_n}
Y_n:= \varphi u_n + \lP Y_{n-1},w_n\rP w_n
\end{equation}
satisfies properties {\rm (1$_n$)},$\ldots$,{\rm (6$_n$)}.
\end{claim}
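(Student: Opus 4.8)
The plan is to check the six properties directly from \eqref{eq:Y_n}, separating those that hold verbatim (by the linear-algebraic shape of the correction) from those that merely persist once $\varphi$ is taken $\Ccal^1$-close enough to $\phi$ on the Runge set $(\overline{\Mcal\setminus\Agot_{\sigma(n)}})\cup K_{\sigma(n)}$ (written $\varphi\approx\phi$). Everything rests on the identity obtained by subtracting \eqref{eq:Y_n-1} from \eqref{eq:Y_n}:
\[
Y_n-Y_{n-1}=\big(\varphi-\lP Y_{n-1},u_n\rP\big)\,u_n,
\]
so the correction is always a holomorphic $\c$-multiple of the single unit vector $u_n\in\lP w_n\rP^\bot$. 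In particular $\lP Y_n-Y_{n-1},w_n\rP\equiv 0$, which is exactly {\rm (2$_n$)}, and $\lP Y_n,w_n\rP=\lP Y_{n-1},w_n\rP$ on all of $\overline{\Mcal}$. On $\overline{\Mcal\setminus\Agot_{\sigma(n)}}$ one has $\phi=\lP Y_{n-1},u_n\rP$, hence there $Y_n-Y_{n-1}=(\varphi-\phi)u_n$ and $\|Y_n-Y_{n-1}\|_{1,\overline{\Mcal\setminus\Agot_{\sigma(n)}};\vartheta_\Ncal}=\|\varphi-\phi\|_{1,\overline{\Mcal\setminus\Agot_{\sigma(n)}};\vartheta_\Ncal}$, which gives {\rm (1$_n$)} for $\varphi\approx\phi$ and also $Y_n\approx Y_{n-1}$ on that set.

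Two of the geometric conditions hold exactly, exploiting the two orthogonality features built into the choice of $u_n$. First, since $\lP w_n\rP^\bot=T_{p_{\sigma(n)}}\Fr\Dcal\cap\Jscr(T_{p_{\sigma(n)}}\Fr\Dcal)$ is a complex subspace of $T_{p_{\sigma(n)}}\Fr\Dcal$, every $\c$-multiple of $u_n$ is parallel to the affine hyperplane $p_{\sigma(n)}+T_{p_{\sigma(n)}}\Fr\Dcal$; adding $Y_n-Y_{n-1}$ therefore does not change the distance of a point to that hyperplane, so $Y_n$ maps $\overline{\Agot_{\sigma(n)}\setminus K_{\sigma(n)}}$ into the slab $\Tcal_{\sigma(n)}=\Tcal(p_{\sigma(n)})$ because, by {\rm (3$_{n-1}$)} with $a=n$, so does $Y_{n-1}$ --- this is {\rm (3$_n$)} for $a=n$. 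Second, $\lP u_n,\Jscr(w_n)\rP=\overline{\imath}\,\lP u_n,w_n\rP=0$, whence $\langle\zeta u_n,\Jscr(w_n)\rangle=0$ for all $\zeta\in\c$; thus the orthogonal projection $\pi_{\sigma(n)}$ onto ${\rm span}_\r(\Jscr(\nu_\Dcal(p_{\sigma(n)})))$ annihilates $Y_n-Y_{n-1}$, so $\pi_{\sigma(n)}\circ Y_n=\pi_{\sigma(n)}\circ Y_{n-1}$ on $\overline{\Mcal}$ and {\rm (4$_n$)} for $a=n$ is literally {\rm (4$_{n-1}$)} for $a=n$.

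The remaining items are pure approximation via the induction hypothesis. The sets $\overline{\Rcal}$ and, for $a\neq n$, $K_{\sigma(a)}$, $\overline{\Agot_{\sigma(a)}\setminus K_{\sigma(a)}}$, $\overline{\beta_{\sigma(a)}\setminus K_{\sigma(a)}}$ are all disjoint from $\Agot_{\sigma(n)}$, hence lie in $(\overline{\Mcal\setminus\Agot_{\sigma(n)}})\cup K_{\sigma(n)}$, where $Y_n\approx Y_{n-1}$. Therefore {\rm (6$_n$)} follows from {\rm (6$_{n-1}$)} and the openness of $\Dcal_\delta$; {\rm (3$_n$)} for $a\neq n$ from {\rm (3$_{n-1}$)} and the openness of the slabs; {\rm (5$_n$)} for $a\neq n$ from {\rm (5$_{n-1}$)} and the positive distance between the disjoint compacta $Y_{n-1}(K_{\sigma(a)})$ and $\overline{\Dcal'}$; and {\rm (4$_n$)} for $a\neq n$ from {\rm (4$_{n-1}$)}, the ($1$-Lipschitz) continuity of $\pi_{\sigma(a)}$, and the positive distance, in the real line ${\rm span}_\r(\Jscr(\nu_\Dcal(p_{\sigma(a)})))$, between the disjoint compacta $\pi_{\sigma(a)}(Y_{n-1}(\overline{\beta_{\sigma(a)}\setminus K_{\sigma(a)}}))$ and $\pi_{\sigma(a)}(\overline{\Dcal'})$. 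It remains to prove {\rm (5$_n$)} for $a=n$: on $K_{\sigma(n)}$ one has $\phi\equiv\zeta_n$, so $Y_n$ is uniformly close to $\zeta_nu_n+\lP Y_{n-1},w_n\rP w_n\in\zeta_nu_n+{\rm span}_\c(w_n)$, an affine complex line which by \eqref{eq:fuera} is at positive distance from the compact set $\overline{\Dcal'}$; hence $Y_n(K_{\sigma(n)})\cap\overline{\Dcal'}=\emptyset$ for $\varphi\approx\phi$.

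Putting these together, $Y_n$ satisfies {\rm (1$_n$)}--{\rm (6$_n$)}, which proves the claim. (That $Y_n$ is moreover a holomorphic immersion follows from its $\Ccal^1$-closeness to the immersion $Y_{n-1}$ on $\overline{\Mcal\setminus\Agot_{\sigma(n)}}$ together with, where needed, a routine general-position adjustment of $\varphi$ supported on $\Agot_{\sigma(n)}\setminus K_{\sigma(n)}$, not affecting {\rm (1$_n$)}--{\rm (6$_n$)}.) The only place where any care is required is the bookkeeping of the split above; the crucial structural fact is that the correction is a scalar multiple of $u_n$, a vector that is simultaneously tangent to the slab hyperplane $T_{p_{\sigma(n)}}\Fr\Dcal$ and $\langle\cdot,\cdot\rangle$-orthogonal to $\Jscr(\nu_\Dcal(p_{\sigma(n)}))$, so that {\rm (3$_n$)} and {\rm (4$_n$)} for the new index $a=n$ come for free; no nontrivial estimate enters.
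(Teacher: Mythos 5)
Your proof is correct and follows essentially the same route as the paper's: you verify (2$_n$) directly, observe that $\varphi\approx\phi$ gives $Y_n\approx Y_{n-1}$ on $\overline{\Mcal\setminus\Agot_{\sigma(n)}}$ (covering (1$_n$), (6$_n$), and the $a\neq n$ cases of (3$_n$), (4$_n$), (5$_n$)), exploit the orthogonality of $u_n$ to $w_n$ for the $a=n$ cases of (3$_n$) and (4$_n$), and use \eqref{eq:fuera} for (5$_n$) with $a=n$. The one small presentational difference is that you make explicit at the outset the structural identity $Y_n-Y_{n-1}=(\varphi-\lP Y_{n-1},u_n\rP)u_n$ and derive from it that (3$_n$) and (4$_n$) hold \emph{exactly} for $a=n$ (the correction, being a $\c$-multiple of $u_n\in\lP w_n\rP^\bot$, is both tangent to the slab hyperplane and killed by $\pi_{\sigma(n)}$), which slightly sharpens the paper's phrasing that these follow ``from (2$_n$), (3$_{n-1}$) resp. (4$_{n-1}$), and the definition of the slab/projection,'' but the underlying argument is the same.
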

Indeed, first of all observe that, up to slightly modifying $\varphi$, $Y_n$ can be assumed to be an immersion by a general position argument. Since $\varphi \approx \phi =\lP Y_{n-1},u_n\rP$ on $\overline{\Mcal\setminus \Agot_{\sigma(n)}}$, then $Y_n \approx Y_{n-1}$ on $\overline{\Mcal\setminus \Agot_{\sigma(n)}}$, and {\rm (1$_n$)} and {\rm (6$_n$)} hold (take into account \eqref{eq:phi}, \eqref{eq:Y_n}, \eqref{eq:Y_n-1},  and {\rm (6$_{n-1}$)}). Property {\rm (2$_n$)} directly follows from \eqref{eq:Y_n}, \eqref{eq:Y_n-1} and the definition of $u_n$ and $w_n$.

To check {\rm (3$_n$)} we distinguish two cases. If $a\neq n$, then $Y_n\approx Y_{n-1}$ on $ \overline{\Mcal\setminus \Agot_{\sigma(n)}} \supset \overline{\Agot_{\sigma(a)}\setminus K_{\sigma(a)}}$; hence {\rm (3$_{n-1}$)} implies that $Y_n(\overline{\Agot_{\sigma(a)}\setminus K_{\sigma(a)}})\subset \Tcal_{\sigma(a)}$. If $a=n$ then the inclusion $Y_n(\overline{\Agot_{\sigma(n)}\setminus K_{\sigma(n)}})\subset \Tcal_{\sigma(n)}$ is ensured by {\rm (2$_n$)}, {\rm (3$_{n-1}$)}, and the fact that $\Tcal_{\sigma(n)}$ is foliated by affine hyperplanes $\langle\cdot,\cdot\rangle$-orthogonal to $\nu_\Dcal(p_{\sigma(n)})$.

For {\rm (4$_n$)} we distinguish two cases again. If $a\neq n$, then {\rm (4$_{n-1}$)} and the fact that $Y_n\approx Y_{n-1}$ on $ \overline{\Mcal\setminus \Agot_{\sigma(n)}} \supset \overline{\beta_{\sigma(a)}\setminus K_{\sigma(a)}}$ give that $\pi_{\sigma(a)}(Y_n(\overline{\beta_{\sigma(a)}\setminus K_{\sigma(a)}}))\cap \pi_{\sigma(a)}(\overline{\Dcal'})=\emptyset$ as well. If $a= n$ then the assertion follows from {\rm (2$_n$)}, {\rm (4$_{n-1}$)}, and the definition of $\pi_{\sigma(n)}$.

Finally, property {\rm (5$_n$)} for $a<n$ is guaranteed by   {\rm (5$_{n-1}$)} and the fact that $Y_n\approx Y_{n-1}$ on $K_{\sigma(a)}$; whereas for $a=n$ is ensured by \eqref{eq:fuera} and that $\varphi\approx\phi$ on $K_{\sigma(n)}$.

This proves the claim, closes the induction, and concludes the construction of the immersions $Y_n\colon\overline{\Mcal}\to\c^2$, $n\in\{1,\ldots,{\tt I}{\tt J}\}$.

Let $\Scal$ denote the connected component of $Y_{{\tt I}{\tt J}}^{-1}(\Dcal')\subset \Mcal\Subset \Ncal$ containing $\overline{\Rcal}$; see {\rm (6$_{\tt IJ}$)}. Up to a slight deformation of $Y_{{\tt I}{\tt J}}$, assume that $\Scal\Subset\Ncal$ is a bordered domain. Define $Y:=Y_{{\tt I}{\tt J}}|_{\overline{\Scal}}\colon \overline{\Scal}\to\c^2$ and let us check that $Y$ meets all the requirements in the statement of the lemma.

Indeed, properties {\rm (x)} and {\rm (1$_n$)}, $n\in \{1,\ldots,{\tt I}{\tt J}\}$, give that 
\begin{equation}\label{eq:<xi}
\|Y_{{\tt I}{\tt J}}-X\|_{1,\overline{\Rcal};\vartheta_\Ncal}<\delta,
\end{equation}
proving Lemma \ref{lem:main}-{\rm (b)}. 

Properties {\rm (4$_{{\tt I}{\tt J}}$)} and {\rm (5$_{{\tt I}{\tt J}}$)} imply that $Y_{{\tt I}{\tt J}}(b\overline{\Mcal})\cap\overline{\Dcal'}=\emptyset$; observe that $b\overline{\Mcal}= \cup_{a=1}^{{\tt I}{\tt J}}\beta_{\sigma(a)}$. This property and the definition of $\overline{\Scal}$ ensure item {\rm (d)} in the lemma.

From {\rm (6$_{{\tt I}{\tt J}}$)} it follows that
\begin{equation}\label{eq:Y(R)}
Y(\overline{\Rcal})\subset \Dcal_\delta\Subset \Dcal',
\end{equation}
hence $\Rcal \Subset \Scal$ and Lemma \ref{lem:main}-{\rm (a)}  holds by the Maximum Principle. Furthermore, \eqref{eq:Y(R)} and {\rm (5$_{{\tt I}{\tt J}}$)} show that 
$b\overline{\Scal}\subset \overline{\Mcal}\setminus \big( \overline{\Rcal} \cup (\cup_{a=1}^{{\tt I}{\tt J}} K_{\sigma(a)}) \big)= \cup_{a=1}^{{\tt I}{\tt J}} \overline{\Agot_{\sigma(a)}\setminus K_{\sigma(a)}}$, and so $\overline{\Scal}\setminus{\Rcal}\subset \cup_{a=1}^{{\tt I}{\tt J}} \overline{\Agot_{\sigma(a)}\setminus K_{\sigma(a)}}$ as well. Then {\rm (3$_{{\tt I}{\tt J}}$)} gives that
\begin{equation}\label{eq:Y(S-R)}
Y(\overline{\Scal}\setminus{\Rcal})\subset \Tcal\cap\overline{\Dcal'} \subset \overline{\Dcal'}\setminus\overline{\Dcal}_{-\varepsilon}
\end{equation}
(take into account that $\Tcal$ has radius $\varepsilon$ for the latter inclusion),
proving Lemma \ref{lem:main}-{\rm (c)}. Finally, \eqref{eq:Y(R)} and \eqref{eq:Y(S-R)} guarantee item {\rm (e)}. 

This concludes the proof.
\end{proof}

%%%%%%%%%%%%%%%%%
%%%%%%%%%%%%%%%%%
%%%%%%%%%%%%%%%%%

\subsection{The desingularization lemma}\label{subsec:embe}

In this subsection we prove the following desingularization result for complex curves in $\c^2$; it is the third key in the proof of Lemma \ref{lem:compilation}.

\begin{lem}\label{lem:embe} 
Let $\Dcal\subset \c^2$ be a strictly convex bounded regular domain. Let $\Ncal$ be an open Riemann surface, let $\vartheta_\Ncal$ be a nowhere-vanishing holomorphic $1$-form on $\Ncal$, and let  $\Rcal$ and $ \Mcal$ be  bordered domains in $\Ncal$, $\Rcal \Subset \Mcal$. Let $X\colon \Ncal\to \c^2$ be a holomorphic immersion satisfying that 
\begin{enumerate}[\rm (I)]
\item $X(b \overline{\Mcal})\subset \Fr \Dcal$ (hence $X(\overline{\Rcal})\subset\Dcal$) and 
\item there are no double points of $X(\overline{\Mcal})$ in  $X(\overline{\Rcal})$; in particular,  $X|_{\overline{\Rcal}}$ is an embedding. 
\end{enumerate}

Then, for any $\epsilon>0$ there exist an open Riemann surface $\Wcal,$ a bordered domain $\Scal \Subset \Wcal,$ and a holomorphic embedding $F\colon\Wcal \to \c^2$ such that:
\begin{enumerate}[\rm (A)]
\item $\overline{\Rcal} \subset \Scal$.
\item $\|F-X\|_{1,\overline{\Rcal};\vartheta_\Ncal}<\epsilon$ and the Hausdorff distance $\dgot^{\rm H} \big( X(\overline{\Mcal}\setminus{\Rcal}),F(\overline{\Scal}\setminus{\Rcal}) \big)<\epsilon$. In particular, $\dgot^{\rm H} \big( X(\overline{\Mcal}),F(\overline{\Scal}) \big)<\epsilon$.
\item $F(b \overline{\Scal}) \subset \Fr \Dcal$.
%\item $F|_{\overline{\Scal}}$ is an embedding.
\end{enumerate}
\end{lem}

The proof of the lemma consists of replacing every normal crossing in $X(\overline{\Mcal})$ by an embedded annulus. It is important to point out that, although this surgery increases the topology,  the arising embedded complex curve $F(\overline{\Scal})$ contains a biholomorphic copy of $\overline{\Rcal}$, which is $\Ccal^1$ close to $X(\overline{\Rcal})$. 
%This will ensure Lemma \ref{lem:compilation}-{\em i)}, which has allowed us to have limit in the proof of Theorem \ref{th:embe}; see Sec.\ \ref{sec:theorem}.

Roughly speaking, we take a holomorphic defining function $\Pscr_0 \colon \overline{\Dcal} \to \c$ of $X(\overline{\Mcal})$ so that  $X(\overline{\Mcal})\equiv\{(\zeta,\xi) \in \overline{\Dcal}\colon \Pscr_0(\zeta,\xi)=0\}$. Then we take a nearby smooth level set 
$\Cscr_\lambda:=\{(\zeta,\xi) \in \overline{\Dcal}\colon \Pscr_0(\zeta,\xi)=\lambda\}$, $\lambda$ close to $0$.  If $\lambda$ is close enough to $0$, $\Cscr_\lambda$ is an embedded complex curve containing  a biholomorphic copy of $\overline{\Rcal}$, and the surface $F(\overline{\Scal}):=\Cscr_\lambda$ solves the lemma.

\begin{proof}[Proof of Lemma \ref{lem:embe}] 

Let  $\Mcal'\Subset \Ncal$ be a bordered domain such that $\Mcal \Subset \Mcal'$,  
\begin{equation}\label{eq:XbM'}
\text{$X(b \overline{\Mcal'})\cap \overline{\Dcal}=\emptyset$, and there are no double points of $X(\overline{\Mcal'})$ in  $X(\overline{\Rcal})$;}
\end{equation}
take into account properties {\rm (I)} and {\rm (II)}.

Let $F_0\colon \overline{\Mcal'} \to \c^2$ be a slight deformation of $X\colon \overline{\Mcal'} \to \c^2$ so that:
\begin{enumerate}[{\rm (i)}]
\item ${F_0}\colon \overline{\Mcal'} \to \c^2$ is a holomorphic immersion.
\item ${F_0}(\overline{\Rcal})\subset \Dcal$, $F_0(b \overline{\Mcal'})\cap \overline{\Dcal}=\emptyset$ (see  \eqref{eq:XbM'}), and ${F_0}(\Mcal')$  and $\Fr \Dcal$ meet transversally.
\item ${F_0}$ is as close to $X$ as desired in the $\Ccal^1$ topology on $\overline{\Mcal'}$; in particular 
\begin{itemize}
\item $\|{F_0}-X\|_{1,\overline{\Rcal};\vartheta_\Ncal}<\epsilon/2$, 
\item there are no double points of $F_0(\overline{\Mcal'})$ in  $F_0(\overline{\Rcal})$ (in particular,  ${F_0}|_{\overline{\Rcal}}:\overline{\Rcal}\to \c^2$ is an embedding), and
\item $\dgot^{{\rm H}}(X(\overline{\Mcal}\setminus{\Rcal}),{F_0}(\overline{S}_0\setminus{\Rcal}))<\epsilon/2$, where $S_0 \Subset \Mcal'$ is the connected component of ${F_0}^{-1}(\Dcal)$ containing $\overline{\Rcal}$.
\end{itemize}
\item All the double points of $F_0(\overline{S}_0)$ are normal crossings and lie in $\Dcal$.
\end{enumerate}
Take into account Remark \ref{rem:position}. Denote by $\Acal:=\big\{ \{P,P^*\}\subset \overline{S}_0\colon P\neq P^* \;\text{and} \; F_0(P)=F_0(P^*)\big\}$ the (finite) {\em double points set of $F_0|_{\overline{S}_0}$}, and call $F_0(\Acal):=\{F_0(P)\colon \{P,{P^*}\}\in \Acal\}\subset\c^2$. Notice from {\rm (ii)} and  {\rm (iii)} that  
\begin{equation}\label{eq:posicion}
\text{$F_0(\overline{S}_0)$ and $\Fr \Dcal$ meet transversally and $F_0(\Acal) \cap \big(F_0(\overline{\Rcal}) \cup \Fr \overline{\Dcal}\big)=\emptyset$.}
\end{equation}
 Without loss of generality, $\overline{S}_0$ can be assumed to be homeomorphic to $\overline{\Mcal}$, but not biholomorphic. 
%Notice also that all the double points of  $F_0(\overline{S}_0)$ are normal crossings. 

The domain $\Dcal$ is a Stein manifold whose second cohomology group $\Hcal^2(\overline{\Dcal},\z)$ vanishes. This implies that  any divisor in $\overline{\Dcal}$ is principal (see for instance \cite[p.\ 98]{Remmert-book}), hence there exists a holomorphic function 
$\Pscr_0\colon \overline{\Dcal} \to \c$   such that 
\[
F_0(\overline{S}_0)=\{(\zeta,\xi) \in \overline{\Dcal}\colon \Pscr_0(\zeta,\xi)=0\}.
\]

From {\rm (iv)} and the fact that $F_0$ is an immersion,  it is not hard to check that $q\in F_0(\Acal_0)$ if and only if
\begin{equation} \label{eq:removal}
\frac{\partial \Pscr_0}{\partial \zeta}(q)=\frac{\partial \Pscr_0}{\partial \xi}(q)=\Pscr_0(q)=0 \quad \text{and} \quad {\rm H}(\Pscr_0)_{q}\neq 0,
\end{equation}
where ${\rm H}(\Pscr_0)_{q}$ denotes the Hessian of $\Pscr_0$ at $q$.

The next step of the proof consists of removing from  $F_0(\overline{S}_0)$ the normal crossings. To do this, we deform this curve in an appropriate way. For each $\lambda \in  \c\setminus\{0\}$ consider the holomorphic function 
\[
\Pscr_\lambda\colon \overline{\Dcal}\to \c, \quad \Pscr_\lambda(\zeta,\xi):=\Pscr_0(\zeta,\xi)-\lambda,
\]
and denote by
\[
\Scal_\lambda:=\{(\zeta,\xi)\in {\Dcal}\colon \Pscr_\lambda(\zeta,\xi)=0\}.
\]
Obviously, 
\begin{equation}\label{eq:lambda0}
\lim_{\lambda\to 0}\Pscr_\lambda= {\Pscr_0}\quad\text{uniformly on  $\c^2$}.
\end{equation} 
\begin{claim} \label{cla:final}
If $|\lambda|>0$ is small enough, 
there exists an  open {\em embedded} complex curve $\Cscr_\lambda$ in $\c^2$ such that $\Cscr_\lambda$ and $\Fr\Dcal$ meet transversally and $\Cscr_\lambda\cap \overline{\Dcal}=\overline{\Scal}_\lambda$.
\end{claim}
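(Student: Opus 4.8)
The plan is to prove that for every sufficiently small $\lambda\neq0$ the number $\lambda$ is a regular value of $\Pscr_0$ on a fixed neighborhood of $\overline{\Dcal}$; then $\{\Pscr_\lambda=0\}=\{\Pscr_0=\lambda\}$ is automatically an embedded complex curve there, and it meets $\Fr\Dcal$ transversally by an openness argument. Geometrically, replacing $\Pscr_0$ by $\Pscr_\lambda=\Pscr_0-\lambda$ is exactly the classical smoothing that turns each normal crossing of $F_0(\overline{S}_0)$ into an embedded annulus: by the holomorphic Morse lemma there are coordinates near each node in which $\Pscr_0$ is the product of the two coordinates, so $\Cscr_\lambda$ is locally the graph-type set $\{z_1z_2=\lambda\}$. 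For the claim as stated this explicit picture is not needed, though, since embeddedness of a regular level set is automatic.

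First I would locate the critical points of $\Pscr_0$. Off the double-points set, $F_0$ is an immersion, hence $\{\Pscr_0=0\}$ is smooth and $d\Pscr_0\neq0$ there; at a double point $q$, property {\rm (iv)} forces the crossing to be normal, so by \eqref{eq:removal} the Hessian ${\rm H}(\Pscr_0)_q$ is non-degenerate (its zero set is the pair of distinct tangent lines of the two branches) and $q$ is an isolated point of the analytic set $\{d\Pscr_0=0\}$. Fix a compact neighborhood $\overline{\Dcal''}$ of $\overline{\Dcal}$ inside a neighborhood of $\overline{\Dcal}$ in $\c^2$ to which $\Pscr_0$ extends holomorphically (say a parallel domain $\overline{\Dcal_t}$, $t>0$ small). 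Then the critical set $\mathcal{C}:=\{d\Pscr_0=0\}\cap\overline{\Dcal''}$ meets $\{\Pscr_0=0\}$ precisely in the finitely many nodes $q_1,\dots,q_N\in F_0(\Acal)$; picking pairwise disjoint balls $B_j\ni q_j$ with $B_j\cap\mathcal{C}=\{q_j\}$, the set $\mathcal{C}\setminus\bigcup_j B_j$ is compact and contains no zero of $\Pscr_0$, so $|\Pscr_0|\geq\epsilon_1>0$ on it.

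Now let $0<|\lambda|<\epsilon_1$. If $p\in\overline{\Dcal''}$ had $\Pscr_0(p)=\lambda$ and $d\Pscr_0(p)=0$, then $p\in\mathcal{C}$ with $|\Pscr_0(p)|<\epsilon_1$, so $p\in B_j$ for some $j$ and therefore $p=q_j$, contradicting $\Pscr_0(q_j)=0\neq\lambda$. Hence $d\Pscr_\lambda=d\Pscr_0\neq0$ along $\{\Pscr_\lambda=0\}\cap\overline{\Dcal''}$, and the regular value theorem yields that $\Cscr_\lambda:=\{\Pscr_\lambda=0\}\cap(\Dcal'')^\circ$ is a closed $1$-dimensional complex submanifold of $(\Dcal'')^\circ$, i.e. an embedded complex curve; it is open because a component contained in a compact subset of $(\Dcal'')^\circ$ would be a compact complex curve in $\c^2$, which is impossible. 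For the transversality, \eqref{eq:posicion} gives that $\{\Pscr_0=0\}$ meets $\Fr\Dcal$ transversally and that no node lies on $\Fr\Dcal$, so $d\Pscr_0\neq0$ on $\{\Pscr_0=0\}\cap\Fr\Dcal$; by \eqref{eq:lambda0} and the implicit function theorem, near the compact hypersurface $\Fr\Dcal$ the curves $\{\Pscr_\lambda=0\}$ converge to $\{\Pscr_0=0\}$ in the $\Ccal^1$ topology as $\lambda\to0$, and since transversality is an open condition, $\Cscr_\lambda$ meets $\Fr\Dcal$ transversally for $|\lambda|$ small. In particular $\overline{\Scal}_\lambda=\{\Pscr_\lambda=0\}\cap\overline{\Dcal}$ is a compact bordered surface and $\Cscr_\lambda\cap\overline{\Dcal}=\overline{\Scal}_\lambda$, as wanted.

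I expect the main obstacle to be exactly the middle step: pinning down a single small $\lambda$ that simultaneously misses every critical value of $\Pscr_0$ accumulated near $\{\Pscr_0=0\}$ over $\overline{\Dcal}$ — this is where the finiteness of the nodes and their nondegeneracy (hence isolatedness in $\{d\Pscr_0=0\}$) are used — together with keeping the perturbation $\Ccal^1$-controlled near $\Fr\Dcal$ so that transversality persists. Once this is done, no ad hoc self-intersection removal is required: embeddedness of $\Cscr_\lambda$ is forced by $\lambda$ being a regular value.
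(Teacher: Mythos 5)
Your proposal is correct, and it proves the claim along the same underlying lines as the paper: both arguments establish that $0$ is a regular value of $\Pscr_\lambda|_{\overline{\Dcal}}$ for all sufficiently small $\lambda\neq 0$ (whence embeddedness is automatic), and both derive transversality with $\Fr\Dcal$ from \eqref{eq:posicion}, \eqref{eq:lambda0}, and the openness of transversality. The one genuine difference is how critical points of $\Pscr_\lambda$ on $\{\Pscr_\lambda=0\}$ are ruled out. You argue directly by compactness: each node $q_j$ is isolated in $\{d\Pscr_0=0\}$ because ${\rm H}(\Pscr_0)_{q_j}\neq 0$ makes $d\Pscr_0$ a local biholomorphism there (via \eqref{eq:removal}), so after excising small balls $B_j$, $|\Pscr_0|$ is bounded below by some $\epsilon_1>0$ on the remaining compact piece of the critical set, and any $0<|\lambda|<\epsilon_1$ works. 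The paper instead introduces the auxiliary map $f(p,\lambda)=\big(\partial_\zeta\Pscr_0,\partial_\xi\Pscr_0,\Pscr_\lambda\big)(p)$ on $\overline{\Dcal}\times\c$ and notes that its Jacobian determinant at $(q_j,0)$ equals $-\det{\rm H}(\Pscr_0)_{q_j}\neq 0$, so $f$ is a local biholomorphism and $(0,0,0)$ has the single preimage $(q_j,0)$ near each node; this again forces regularity for small $\lambda\neq 0$. Both routes hinge on exactly the same fact — Hessian nondegeneracy at the normal crossings — so there is no difference in the essential idea, only in the packaging; your version is a bit more explicit about where the lower bound on $|\lambda|$ comes from. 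The only caveat worth flagging is that you should state that $\overline{\Dcal''}$ is taken close enough to $\overline{\Dcal}$ that $\mathcal{C}\cap\{\Pscr_0=0\}$ is still exactly the finite node set $F_0(\Acal)$; this follows from \eqref{eq:posicion} (the nodes lie in $\Dcal$, away from $\Fr\Dcal$) and continuity, but it is left implicit in your write-up.
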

\begin{proof} To prove the claim, it suffices to check that $0$ is a regular value for $\Pscr_\lambda|_{\overline{\Dcal}}$. 

Consider the holomorphic function $f\colon \overline{\Dcal} \times \c \to \c^3$ given by: 
\[
f(p,\lambda)=\Big(\frac{\partial\Pscr_0}{\partial \zeta}\,,\, \frac{\partial\Pscr_0}{\partial \xi} \,,\, \Pscr_\lambda\Big)(p).
\] 
 Obviously, $0$ is a regular value for $\Pscr_\lambda|_{\overline{\Dcal}}$ if and only if $f^{-1}(0,0,0)\subset S_0$ (take into account that $\Scal_\lambda \cap F_0(S_0)=\emptyset$, $\lambda \neq 0$). Since any double point $p$ of $\Scal_\lambda$ satisfies 
$\frac{\partial \Pscr_0}{\partial \zeta}(p)=\frac{\partial \Pscr_0}{\partial \xi}(p)=0$, equations \eqref{eq:posicion},  \eqref{eq:removal}, and \eqref{eq:lambda0} give that the double points set of $\Scal_\lambda$  converges, as $\lambda$ goes to $0$, to $F_0(\Acal)$. 
On the other hand, the Jacobian  of $f$
\[
{\rm Jac}f_{(q,0)}=-{\rm H}(\Pscr_0)_q\neq 0 \quad\text{for any $ q\in F_0(\Acal)$};
\]
see {\rm (iv)} and \eqref{eq:removal}.  Therefore,  $f$ is local biholomorphism around points $(q,0)$, $q\in F_0(\Acal)$, and we are done.

The claim follows from \eqref{eq:posicion}, \eqref{eq:lambda0} and the fact that $\Scal_\lambda$ is a submanifold of $\overline{\Dcal}$.
\end{proof}

As a consequence of Claim \ref{cla:final}, the embedded complex curve $\Scal_\lambda$ is a (connected) bordered domain in $\Cscr_\lambda$  with $b \overline{\Scal}_\lambda \subset \Fr \Dcal$.

On the other hand, one has that
\begin{equation}\label{eq:lambda1}
\text{$\lim_{\lambda\to 0} \dgot^{\rm H} \big(\overline{\Scal}_\lambda\cap \Kcal,F_0(\overline{S}_0)\cap \Kcal\big)=0\;$ for any compact $\Kcal \subset \overline{\Dcal}$.}
\end{equation}
It is interesting to notice that the convergence of $\overline{\Scal}_\lambda$ to $F_0(\overline{S}_0)$, as $\lambda$ goes to $0$, is nice outside the double points set $F_0(\Acal)$, as the following claim shows:

\begin{claim} \label{cla:nice} 
Let $\Omega \Subset S_0$ be a bordered domain such that 
$F_0(\overline{\Omega})\cap F_0(\Acal)=\emptyset$ (in particular,  ${F_0}|_{\overline{\Omega}}\colon \overline{\Omega} \to \c^2$ is an embedding). Then, if  $|\lambda|>0$ is small enough, there exist a bordered domain $\Omega_\lambda \Subset \Scal_\lambda$ and a biholomorphism $\sigma_\lambda \colon \overline{\Omega} \to \overline{\Omega}_\lambda$ such that
\[
\lim_{\lambda \to 0} \|\sigma_\lambda -F_0\|_{1,\overline{\Omega};\vartheta_\Ncal}=0.
\]
\end{claim}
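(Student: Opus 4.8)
The plan is to realize the curve $\overline{\Scal}_\lambda$ near $F_0(\overline{\Omega})$ as a ``graph'' over $F_0(\overline{\Omega})$ in a fixed holomorphic transverse direction, and to read off the parametrization $\sigma_\lambda$ from the holomorphic implicit function theorem. First I would fix a bordered domain $\Omega'\Subset S_0$ with $\overline{\Omega}\Subset\Omega'$ and $F_0(\overline{\Omega'})\cap F_0(\Acal)=\emptyset$; then $F_0|_{\overline{\Omega'}}$ is a holomorphic embedding, $F_0(\overline{\Omega'})\subset\{\Pscr_0=0\}\cap\Dcal$ (recall $\Pscr_0\circ F_0\equiv0$ on $S_0$), and, by \eqref{eq:posicion} and \eqref{eq:removal}, $d\Pscr_0$ vanishes on the curve $F_0(\overline{S}_0)$ only at the normal crossings $F_0(\Acal)$, which $F_0(\overline{\Omega'})$ avoids. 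Hence $\Pscr_0$ is a holomorphic submersion on some neighbourhood $V\Subset\Dcal$ of $F_0(\overline{\Omega'})$, with $\ker d\Pscr_{0,F_0(P)}=dF_{0,P}(T_P\Ncal)$ for every $P\in\overline{\Omega'}$.

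Next I would produce a holomorphic map $N\colon\overline{\Omega'}\to\c^2$ with $N(P)\notin dF_{0,P}(T_P\Ncal)$ for all $P$. Replacing $\Omega'$ by a slightly larger open (hence Stein) Riemann surface if necessary, this is possible because the normal line bundle $F_0^*T\c^2/dF_0(T\Omega')$ of the immersion is holomorphically trivial (all holomorphic line bundles on a non-compact Riemann surface are trivial), so it carries a nowhere-vanishing holomorphic section, and the vanishing of $H^1(\Omega',T\Omega')$ lets one lift that section along $F_0^*T\c^2\to F_0^*T\c^2/dF_0(T\Omega')$ to the desired $N$. I expect this ``soft'' global step (equivalently, the existence of a holomorphic tubular-neighbourhood retraction of $V$ onto $\{\Pscr_0=0\}\cap V$) to be the only point that is not completely routine; everything else is the parametrized implicit function theorem together with the $\Ccal^1$-openness of embeddings.

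With $N$ fixed, for $(P,t,\lambda)$ in a neighbourhood of $\overline{\Omega'}\times\{0\}\times\{0\}$ one has $F_0(P)+tN(P)\in V$, so
\[
\Theta(P,t,\lambda):=\Pscr_0\big(F_0(P)+tN(P)\big)-\lambda
\]
is holomorphic there; moreover $\Theta(P,0,0)=\Pscr_0(F_0(P))=0$ and $\partial_t\Theta(P,0,0)=d\Pscr_{0,F_0(P)}(N(P))\neq0$ by the transversality of $N$. I would then apply the holomorphic implicit function theorem on $\Omega'$ together with the compactness of $\overline{\Omega}$ to get, for some $\rho>0$, a holomorphic function $\tau\colon\overline{\Omega}\times\{|\lambda|<\rho\}\to\c$ with $\tau(\cdot,0)\equiv0$ and $\Pscr_0\big(F_0(P)+\tau(P,\lambda)N(P)\big)=\lambda$ for all $P\in\overline{\Omega}$ and $|\lambda|<\rho$; Cauchy estimates in $\lambda$ then give $\tau(\cdot,\lambda)\to0$ in the $\Ccal^1$ topology on $\overline{\Omega}$ as $\lambda\to0$.

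Finally I would set $\sigma_\lambda(P):=F_0(P)+\tau(P,\lambda)N(P)$, a holomorphic map $\overline{\Omega}\to\{\Pscr_0=\lambda\}$ with $\|\sigma_\lambda-F_0\|_{1,\overline{\Omega};\vartheta_\Ncal}\to0$ as $\lambda\to0$ (since $N$ is fixed and $\tau(\cdot,\lambda)\to0$ in $\Ccal^1$). Because $F_0|_{\overline{\Omega}}$ is a holomorphic embedding and being an injective immersion of a compact surface with boundary is an open condition in the $\Ccal^1$ topology, $\sigma_\lambda$ is a holomorphic embedding for $|\lambda|$ small; being $\Ccal^0$-close to $F_0(\overline{\Omega})\Subset\Dcal$, its image then lies in $\Dcal$, hence in $\Scal_\lambda$. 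Thus $\Omega_\lambda:=\sigma_\lambda(\Omega)\Subset\Scal_\lambda$ is a bordered domain (a biholomorphic image of one, lying inside the embedded curve $\Cscr_\lambda$ of Claim \ref{cla:final}) and $\sigma_\lambda\colon\overline{\Omega}\to\overline{\Omega}_\lambda$ is a biholomorphism with $\lim_{\lambda\to0}\|\sigma_\lambda-F_0\|_{1,\overline{\Omega};\vartheta_\Ncal}=0$, as required.
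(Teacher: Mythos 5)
Your proposal is correct and takes essentially the same route as the paper: both introduce a holomorphic vector field transverse to $F_0$ along $\overline{\Omega}$ (the paper's $G=(f_1,f_2)$ with $f_2\,dz_0-f_1\,dw_0\neq0$, obtained via Riemann--Roch; your $N$, obtained from triviality of the normal bundle and $H^1$ vanishing --- the two transversality conditions coincide) and then realize the nearby level sets $\{\Pscr_0=\lambda\}$ as graphs over $F_0(\overline{\Omega})$ in that direction. Where the paper builds the tubular coordinate $\Psi\colon\overline{\Omega}\times\d_\delta\to V_\delta$ and reads $\sigma_\lambda$ off as the inverse of the projection restricted to a leaf of the foliation, you apply the implicit function theorem to get $\tau(P,\lambda)$ and set $\sigma_\lambda(P)=F_0(P)+\tau(P,\lambda)N(P)$; these are equivalent implementations of the same idea.
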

\begin{proof} 
Write ${F_0}=(z_0,w_0)$ and choose any holomorphic $G:=(f_1,f_2)\colon\overline{\Omega} \to \c^2$  such that 
\begin{equation} \label{eq:G}
\text{$f_2 dz_0-f_1 dw_0$ vanishes nowhere on $\overline{\Omega}$};
\end{equation}
existence of such a $G$ follows from the fact that ${F_0}$ is an immersion on $\overline{\Omega}$ and  Riemann-Roch's theorem.  For any $\delta>0$, denote by $\d_\delta=\{t \in \c\colon |t|<\delta\}$ and set the holomorphic function
\[
\Phi\colon \overline{\Omega} \times \d_1 \to  \c^2, \quad  \Phi(P,t)= {F_0}(P)+t G(P).
\]
Notice that $\Phi$ is a local biholomorphism around  $(P,0),$ $P \in \Omega$ (see \eqref{eq:G}). Denote by
$V_\delta=\Phi(\overline{\Omega} \times \d_\delta)$, $\delta\in ]0,1[$, and choose $\delta$ small enough so that $\overline{V}_\delta \subset \Dcal$, $V_\delta\cap F_0(\Acal)=\emptyset$, and
$$\Psi \colon \overline{\Omega} \times \d_\delta \to  V_\delta, \quad \Psi(P,t)=\Phi(P,t),$$ is a biholomorphism; take into account that ${F_0}|_{\overline{\Omega}}\colon \overline{\Omega}\to \c^2$ is an embedding and $F_0(\overline{\Omega})\cap F_0(\Acal)=\emptyset$. Call $\pi\colon \overline{\Omega}\times \d_1 \to \overline{\Omega}$ the natural holomorphic projection.

If $\delta$ is small enough,   $0$ is a regular value for $\Pscr_\lambda|_{V_\delta}$ for any $\lambda$; take into account \eqref{eq:removal} and the fact that  $F_0(\overline{\Omega})\cap F_0(\Acal)=\emptyset$. Therefore, 
$\Gamma:=\{\Scal_\lambda\cap V_\delta\colon \lambda \in \c\}$ is a regular holomorphic foliation of $V_\delta$ transverse to the field $G\circ \pi\circ \Psi^{-1}$ (see \eqref{eq:G}), and so,  $\pi$ is one to one on sheets of $\Gamma$. To finish, it suffices to set  $\Omega_\lambda:= V_\delta\cap{\Scal}_\lambda$ and observe that for   $|\lambda|>0$ small enough:
\begin{itemize}
\item ${\Omega}_\lambda \Subset \Scal_\lambda$ and $\rho_\lambda:=(\pi \circ \Psi^{-1})|_{\overline{\Omega}_\lambda}\colon \overline{\Omega}_\lambda \to \overline{\Omega}$ is  a biholomorphism, and
\item $\lim_{\lambda \to 0} \|\sigma_\lambda -F_0\|_{1,\overline{\Omega};\vartheta_\Ncal}=0$, where $\sigma_\lambda:=\rho_\lambda^{-1}$;
\end{itemize}
see \eqref{eq:lambda0}. This proves the claim.
\end{proof}

In view of Claim \ref{cla:final}, to finish it suffices to find a bordered domain $\Rcal_\lambda \Subset \Scal_\lambda\Subset\Cscr_\lambda$  biholomorphic to $\Rcal$ such that   $\overline{\Rcal}_\lambda$ converge as $\lambda \to 0$ to $F_0(\overline{\Rcal})$; see  \eqref{eq:jauar} below.

Indeed, Claim \ref{cla:nice} applies to $\Rcal$ furnishing a bordered domain $\Rcal_\lambda\Subset \Scal_\lambda$ and a biholomorphism $\sigma_\lambda\colon \overline{\Rcal}\to \overline{\Rcal}_\lambda $, $|\lambda|>0$ small enough. Furthermore, if $\lambda_0\in \c\setminus\{0\}$ is sufficiently close to $0$, the following conditions are satisfied:
\begin{itemize}
\item  $\sigma_{\lambda_0}\colon \overline{\Rcal} \to \overline{\Rcal}_{\lambda_0}$ is a biholomorphism.
\item $\|\sigma_{\lambda_0}-{F_0}\|_{1,\overline{\Rcal};\vartheta_\Ncal}<\epsilon/2$.
\item $\dgot^{\rm H} \big( {F_0}(\overline{S}_0\setminus {\Rcal}),\overline{\Scal}_{\lambda_0}\setminus {\Rcal}_{\lambda_0} \big)<\epsilon/2$.
\end{itemize}
For the last item, take into account that $F_0(\Acal) \cap F_0(\overline{\Rcal})=\emptyset$ (see \eqref{eq:posicion}), \eqref{eq:lambda1}, and 
\begin{equation}\label{eq:jauar}
 \lim_{\lambda \to 0} \|\sigma_{\lambda}-{F_0}\|_{1,\overline{\Rcal};\vartheta_\Ncal}=0.
\end{equation}

Set $\Scal:=\Scal_{\lambda_0}$ and $\Wcal=\Cscr_{\lambda_0}$. Up to identifying $\overline{\Rcal}$ with $\overline{\Rcal}_{\lambda_0}$ via $\sigma_{\lambda_0}$ (hence $\overline{\Rcal}\subset \Scal$) and taking into account {\rm (iii)} and Claim \ref{cla:final}, the open Riemann surface $\Wcal,$ the bordered domain $\Scal \Subset \Wcal,$ and the holomorphic embedding $F:={\rm Id}\colon\Wcal \to \Wcal \hookrightarrow \c^2$ satisfy all the requirements in the statement of the lemma. 
\end{proof}

%%%%%%%%%%%%%%%%%
%%%%%%%%%%%%%%%%%
%%%%%%%%%%%%%%%%%

\subsection{Proof of Lemma \ref{lem:compilation}}\label{subsec:proofcompi}

By \eqref{eq:compilation1},  $X(\overline{\Ucal})$ and $\Fr \Dcal$ meet transversally (see Remark \ref{re:trans}). Thus, we can find a small $\rho\in]0,\epsilon/2[$ and a bordered domain $\Vcal\Subset\Ncal$ such that $\Dcal_\rho\Subset \Dcal'$,
$\Ucal\Subset\Vcal$, $X$ extends as a holomorphic embedding $X\colon\overline{\Vcal}\to\c^2$, $X(b\overline{\Vcal})\subset\Fr\Dcal_{\rho}$, $X(\overline{\Vcal}\setminus\Ucal) \subset \overline{\Dcal}_\rho\setminus\Dcal$, and 
\begin{equation} \label{eq:d-d}
|{\bf d}(\Dcal_\rho,\Fr\Dcal')-{\bf d}(\Dcal,\Fr\Dcal')|<\epsilon/2.
\end{equation}

Take  $\epsilon_0\in ]0,\rho/2[$, and notice that
\begin{equation}\label{eq:U}
X(\overline{\Ucal})\subset \overline{\Dcal}\subset \Dcal_{\rho-\epsilon_0}; 
\end{equation}
see  \eqref{eq:compilation1} and use the Maximum Principle.
Since  $X(b\overline{\Vcal})\subset\Fr\Dcal_{\rho}$, Lemma \ref{lem:net} furnishes a tangent net $\Tcal$ of radius $\mu\in
 ]0,\min \{\epsilon_0,\dist(\overline{\Dcal}_\rho,\Fr\Dcal'),1/\kappa(\Dcal_\rho)\}[$  for $\Dcal_\rho$ such that:
\begin{enumerate}[\rm ({A}1)]
\item $X(b\overline{\Vcal})\subset\Tcal$, and
\item $\ell(\alpha)>{\bf d}(\Dcal_\rho,\Fr\Dcal')-\mu$ for any Jordan arc $\alpha$ in $\Tcal$ connecting $\Fr\Dcal_\rho$ and $\Fr\Dcal'$.
\end{enumerate}

Take $\varsigma\in]0,\mu[$ small enough so that $\Dcal_{\rho+\varsigma}\Subset \Dcal'$,
\begin{enumerate}[\rm ({B}1)]
\item $\ell(\alpha)>{\bf d}(\Dcal_\rho,\Fr\Dcal')-\mu$ for any Jordan arc $\alpha$ in $\Tcal$ connecting $\Fr\Dcal_{\rho+\varsigma}$ and $\Fr\Dcal'$ (see {\rm (A2)}), and
\item any holomorphic map $G\colon\overline{\Vcal}\to\c^2$ with $\|G-X\|_{1,\overline{\Vcal};\vartheta_\Ncal}<\varsigma$ satisfies that
\begin{enumerate}[\rm ({B2}.1)]
\item $G$ is an embedding in $\overline{\Vcal}$ (recall that $X\colon\overline{\Vcal}\to\c^2$ is an embedding and use the Cauchy estimates),
\item $G(\overline{\Ucal})\subset\Dcal_{\rho-\epsilon_0}$, and $G(\overline{\Vcal}\setminus\Ucal)\cap \overline{\Dcal}_{-\epsilon}=\emptyset$ (see \eqref{eq:U} and use the fact $X(\overline{\Vcal}\setminus\Ucal) \subset \overline{\Dcal}_\rho\setminus\Dcal$ is disjoint from $\overline{\Dcal}_{-\epsilon}$).
\end{enumerate}
\end{enumerate}

From {\rm (A1)} and \eqref{eq:compilation1}, Lemma \ref{lem:main} applies to the data
\[
(\Dcal,\Dcal',\varepsilon,\Tcal,\delta,\Ncal,\vartheta_\Ncal,\Rcal,X)=(\Dcal_\rho,\Dcal',\mu,\Tcal,\varsigma,\Ncal,\vartheta_\Ncal,\Vcal,X)
\]
providing a bordered domain $\Wcal\Subset\Ncal$ and a holomorphic immersion $Y\colon \overline{\Wcal}\to\c^2$ such that: 
\begin{enumerate}[\rm ({C}1)]
\item $\Vcal\Subset\Wcal$ and $\Vcal$ and $\Wcal$ are homeomorphically isotopic.
\item $\|Y-X\|_{1,\overline{\Vcal};\vartheta_\Ncal}<\varsigma$; in particular, $Y|_{\overline{\Vcal}}$ is an embedding (see {\rm (B2)}).
\item $Y(\overline{\Wcal}\setminus\Vcal)\subset\overline{\Dcal'}\setminus\overline{\Dcal}_{\rho-\mu}$.
\item $Y(b\overline{\Wcal})\subset \Fr\Dcal'$.
\item $Y(\overline{\Wcal})\subset\Dcal_{\rho+\varsigma}\cup\Tcal$.
\end{enumerate}

Notice that 
\begin{equation}\label{eq:U1}
Y(\overline{\Ucal})\subset\Dcal_{\rho-\epsilon_0}\quad\text{and}\quad Y(\overline{\Vcal}\setminus\Ucal)\cap \overline{\Dcal}_{-\epsilon}=\emptyset;
\end{equation}
take into account {\rm (C2)} and {\rm (B2.2)}. Since $\mu<\epsilon_0<\rho$,  {\rm (C3)} and the latter assertion in \eqref{eq:U1} give that
\begin{equation}\label{eq:tiron}
Y(\overline{\Wcal}\setminus \Ucal)\cap\overline{\Dcal}_{-\epsilon}=\emptyset.
\end{equation}

The fact that $Y|_{\overline{\Vcal}}$ is an embedding (see {\rm (C2)}), property {\rm (C3)}, the first assertion in \eqref{eq:U1}, and the fact $\mu<\epsilon_0$, ensure that there are no double points of $Y(\overline{\Wcal})$ in $Y(\overline{\Ucal})$. From this fact and {\rm (C4)}, Lemma \ref{lem:embe} applies to the data
\[
(\Dcal,\Ncal,\vartheta_\Ncal,\Rcal,\Mcal,X,\epsilon)=(\Dcal',\Ncal,\vartheta_\Ncal, \Ucal, \Wcal, Y,\eta),
\]
where $\eta\in]0,\epsilon-\varsigma[$ will be specified later, furnishing an open Riemann surface $\Ncal'$, a bordered domain $\Ucal'$, and a holomorphic embedding $F\colon \Ncal'\to\c^2$ satisfying:
\begin{enumerate}[\rm ({D}1)]
\item $\overline{\Ucal}\Subset \Ucal'$.
\item $\|F-Y\|_{1,\overline{\Ucal};\vartheta_\Ncal}<\eta$ and $\dgot^{\rm H}(Y(\overline{\Wcal}\setminus \Ucal),F(\overline{\Ucal'}\setminus \Ucal))<\eta$.
\item $F(b\overline{\Ucal'})\subset\Fr\Dcal'$.
%\item $X':=F|_{\overline{\Ucal'}}$ is an embedding.
\end{enumerate}

Let us check that the embedding $X':=F|_{\overline{\Ucal'}}\colon\overline{\Ucal'}\to\c^2$ solves the lemma. {\rm (D1)} and {\rm (D3)} agree with Lemma \ref{lem:compilation}-{\it i)} and {\it iii)}, respectively. Property {\it ii)} follows from {\rm (C2)} and {\rm (D2)}; recall that $\eta<\epsilon-\varsigma$. Property {\it iv)} is given by \eqref{eq:tiron} and {\rm (D2)} provided that $\eta$ is chosen small enough.   
 
Finally, let us check {\it v)}. Let $\gamma$ be a Jordan arc in $\overline{\Ucal'}$ connecting $b\overline{\Ucal}$ and $b\overline{\Ucal'}$. From {\rm (C5)}, {\rm (D2)}, and the first assertion in \eqref{eq:U1}, it follows that $X'(\overline{\Ucal'})\subset \Dcal_{\rho+\varsigma}\cup\Tcal$ and $X'(\overline{\Ucal})\subset\Dcal_{\rho-\epsilon_0}\Subset\Dcal_{\rho+\varsigma}$,   provided that $\eta$ is small enough. Taking also {\rm (D3)} into account, we deduce that $\gamma$ contains a sub-arc $\gamma'$ such that $X'(\gamma')$ is contained in $\Tcal$ and connects $\Fr\Dcal_{\rho+\varsigma}$ and $\Fr\Dcal'$. By {\rm (B1)}, $\ell(X'(\gamma))\geq \ell(X'(\gamma'))>{\bf d}(\Dcal_\rho,\Fr\Dcal')-\mu> {\bf d}(\Dcal,\Fr\Dcal')-\epsilon$. For the last inequality,  take into account that $\mu<\epsilon/2$ and 
 ${\bf d}(\Dcal_\rho,\Fr\Dcal')>{\bf d}(\Dcal,\Fr\Dcal')-\epsilon/2$; see \eqref{eq:d-d}. This concludes the proof.

%%%%%%%%%%%%%%%%%
%%%%%%%%%%%%%%%%%
%%%%%%%%%%%%%%%%%

\section{Image complete complex curves in convex domains}\label{sec:extrinsic}

In this section we make use of Lemmas \ref{lem:net} and \ref{lem:main} in order to prove Theorem \ref{th:extrinsic} below. Observe that Theorem \ref{th:intro2} in the introduction is a particular instance of it.

Let $\Ncal$ be an open Riemann surface.  A domain $U\subset \Ncal$ is said to be {\em homeomorpically isotopic} to $\Ncal$ if there exists a homeomorphism $\mu\colon U \to \Ncal$ satisfying $\mu_*=i_*,$ where $i\colon U \hookrightarrow \Ncal$ is the inclusion map and 
$\mu_*,$ $i_*\colon \Hcal_1(U,\z) \to \Hcal_1(\Ncal,\z)$ are the induced group morphisms. In this case, $\Hcal_1(U,\z)$ and $\Hcal_1(\Ncal,\z)$ will be identified via $\mu_*.$

\begin{thm}\label{th:extrinsic}
Let $\Bcal$ be a (possibly neither bounded nor regular) convex domain in $\c^2$ and let $\Dcal\Subset\Bcal$ be a bounded regular strictly convex domain. Let $\Ncal$ be an open Riemann surface equipped with a  nowhere-vanishing holomorphic $1$-form $\vartheta_\Ncal$, let $\Mcal\Subset \Ncal$ be a Runge bordered domain,  and let $X\colon \overline{\Mcal}\to\c^2$ be a holomorphic immersion such that 
\begin{equation}\label{eq:th-extrinsic}
X(b\overline{\Mcal})\subset \Fr \Dcal.
\end{equation}

Then, for any $\epsilon\in]0,\min \{\dist(\overline{\Dcal},\Fr\Bcal),1/\kappa(\Dcal)\}[$ there exist a domain $U\subset\Ncal$ and a holomorphic immersion $Y\colon U\to\c^2$ satisfying the following properties:
\begin{enumerate}[\rm (A)]
\item $\Mcal\Subset U$ and $U$ is homeomorphically isotopic to $\Ncal$.
\item $\|Y-X\|_{1,\overline{\Mcal};\vartheta_\Ncal}<\epsilon$ (see \eqref{eq:norma1}).
\item $Y(U)\subset \Bcal$ and $Y\colon U\to \Bcal$ is a proper map.
\item $Y(U\setminus\overline{\Mcal})\subset \Bcal\setminus \Dcal_{-\epsilon}$.
\item $Y$ is image complete  (see Def.\ \ref{def:extrinsic}).
\end{enumerate}
\end{thm}

\begin{proof}
Denote by $\Dcal^0:=\Dcal$ and let $\{\Dcal^n\}_{n\in\n}$ be a ${\bf d}$-proper sequence of convex domains in $\Bcal$ with $\Dcal^0\Subset\Dcal^1$; see Def.\ \ref{def:proper} and Lemma \ref{lem:sucprop}.

Call $N_0:=\Mcal$ and let $\{N_n\}_{n\in\n}$ be an exhaustion of $\Ncal$ by bordered domains so that $\overline{N}_n\subset\Ncal$ is Runge, $N_{n-1}\Subset N_n$ and the Euler characteristic $\chi(\overline{N}_n\setminus N_{n-1})\in\{-1,0\}$ for all $n\in\n$; cf. \cite[Lemma 4.2]{AL-Narasimhan}. 

Call $U_0:=N_0$, $X_0:=X$, and $\eta_0:={\rm Id}_{U_0}\colon U_0\to U_0$, let $\epsilon_0\in]0,\epsilon/2[$, and let us construct a sequence $\{\Upsilon_n=(U_n, \eta_n, X_n, \epsilon_n)\}_{n\in\n}$; where
\begin{itemize}
\item $U_n\Subset\Ncal$ is a bordered domain and  $\overline{U}_n$ is Runge in $\Ncal$,
\item $\eta_n\colon \overline{U}_n\to \overline{N}_n$ is an isotopical homeomorphism,
\item $X_n\colon \overline{U}_n\to\c^2$ is a holomorphic immersion, and
\item $\epsilon_n>0$, 
\end{itemize}
such that the following properties hold for all $n\in\n$:
\begin{enumerate}[\rm (1$_n$)]
\item $U_{n-1}\Subset U_n$.
\item $\eta_n|_{\overline{U}_{n-1}}=\eta_{n-1}$.
\item $\epsilon_n$ is a positive real number satisfying that
\begin{itemize}
\item $\epsilon_n<\min\{ \epsilon_{n-1}/2, 1/\kappa(\Dcal^{n-1}), \dist(\Dcal^{n-1},\Fr\Dcal^n)\}(<\epsilon/2^{n+1})$ and 
\item any holomorphic function $G\colon \overline{U}_{n-1}\to\c^2$ with $\|G-X_{n-1}\|_{1,\overline{U}_{n-1};\vartheta_\Ncal}<2\epsilon_n$ is an immersion.
\end{itemize}
%\begin{equation}\label{eq:rhoN}
%\rho_n:=\min\big\{\min_{\overline{U}_a} \|dX_a/\vartheta_\Ncal\|\colon a\in\{0,\ldots,n-1\}\big\}>0.
%\end{equation}
\item $\|X_n-X_{n-1}\|_{1,\overline{U}_{n-1};\vartheta_\Ncal}<\epsilon_n$.
\item $X_n(\overline{U}_a\setminus U_{a-1})\subset \Dcal^{a+1}\setminus \overline{\Dcal^{a-1}_{-\epsilon_a}}$ for all $a\in\{1,\ldots,n\}$.
\item $X_n(b\overline{U}_n)\subset \Fr\Dcal^n$; hence $X_n(\overline{U}_n\setminus{U}_{n-1})\subset \overline{\Dcal^{n}}\setminus \overline{\Dcal^{n-1}_{-\epsilon_n}}$.
\item $\ell(\gamma)> {\bf d}(\Dcal^{a-1},\Fr\Dcal^a)-\epsilon_a$ for any Jordan arc $\gamma\subset X_n(\overline{U}_n)\subset\c^2$ connecting $\Fr\Dcal^{a-1}$ and $\Fr\Dcal^a$, for all $a\in\{1,\ldots,n\}$.
\end{enumerate}

The sequence will be constructed in an recursive way. For the basis of the induction take $\Upsilon_0=(U_0,\eta_0,X_0,\epsilon_0).$ Notice that (6$_0$) agrees with \eqref{eq:th-extrinsic}, and the remaining properties ($j_0$), $j\neq 6,$ are empty.

%we choose any $\epsilon_1>0$ satisfying {\rm (3$_1$)}. From \eqref{eq:th-extrinsic} and Lemma \ref{lem:net}, there exist a tangent net $\Tcal_1$ of radius $<\epsilon_1$ for $\Dcal^0$ and a positive $\delta_1<\epsilon_1$, such that
%\begin{enumerate}[\rm (i)]
%\item $X_0(b\overline{U}_0)\Subset \Tcal_1$ and
%\item $\ell(\gamma)> {\bf d}(\Dcal^0,\Fr\Dcal^1)-\epsilon_1$ for any Jordan arc $\gamma\subset \Tcal_1$ connecting $\Fr\Dcal^0_{\delta_1}$ and $\Fr\Dcal^1$.
%\end{enumerate}
%
%Property {\rm (i)} allows one to apply Lemma \ref{lem:main} to the data
%\[
%\Dcal= \Dcal^0,\enskip \Dcal'=\Dcal^1,\enskip \varepsilon=\epsilon_1,\enskip \Tcal=\Tcal_1, \enskip \delta=\delta_1,\enskip \Rcal= U_0 ,\enskip\text{and}\enskip X=X_0.
%\]
%The resulting bordered domain $U_1$ and holomorphic immersion $X_1\colon\overline{U}_1\to\c^2$ meet properties {\rm (1$_1$)} and {\rm (4$_1$)}-{\rm (7$_1$)}. Indeed, all that properties follow directly except for {\rm (7$_1$)}, which is implied by {\rm (ii)} and Lemma \ref{lem:main}-{\rm (e)}. To finish the construction of $\Upsilon_1$ we choose any isotopical homeomorphism $\eta_1\colon \overline{U}_1\to \overline{N}_1$ satisfying {\rm (2$_1$)}, which exists by \eqref{eq:n1-n0}.  

For the inductive step, fix $n\in\n$ and  assume that we have already constructed $\Upsilon_m$  satisfying the above properties for all  $m\in\{0,\ldots, n-1\}$. Let us construct $\Upsilon_n$.

Choose any $\epsilon_n>0$ satisfying {\rm (3$_n$)} and
\begin{enumerate}[\rm (i)]
\item $\ell(\gamma)> {\bf d}(\Dcal^{n-2},\Fr\Dcal^{n-1})-\epsilon_{n-1}$ for any Jordan arc $\gamma$ in $X_{n-1}(\overline{U}_{n-1})$ connecting $\Fr\Dcal^{n-2}$ and $\Fr\Dcal^{n-1}_{-\epsilon_n}$; take into account {\rm (7$_{n-1}$)}. When $n=1$, this condition is empty.
\end{enumerate}
Such $\epsilon_n$ exists since $X_{n-1} \colon \overline{U}_{n-1}\to\c^2$ is an immersion.

 We distinguish two cases.

\noindent$\bullet$ Assume that $\chi(\overline{N}_n\setminus N_{n-1})=0$. From {\rm (6$_{n-1}$)} and Lemma \ref{lem:net}, there exists a tangent net $\Tcal_n$ of radius $<\epsilon_n$ for $\Dcal^{n-1}$ such that
\begin{enumerate}[\rm (i)]
\item[\rm (ii)] $X_{n-1}(b\overline{U}_{n-1})\Subset \Tcal_n$ and
\item[\rm (iii)] $\ell(\gamma)> {\bf d}(\Dcal^{n-1},\Fr\Dcal^n)-\epsilon_n$ for any Jordan arc $\gamma\subset \Tcal_n$ connecting $\Fr\Dcal^{n-1}$ and $\Fr\Dcal^n$.
\end{enumerate}

Let $\delta_n\in ]0,\epsilon_n[$ to be specified later, and  choose it small enough so that 

\begin{enumerate}[\rm (i)]
\item[{\rm (iv)}] $\ell(\gamma)> {\bf d}(\Dcal^{n-1},\Fr\Dcal^n)-\epsilon_n$ for any Jordan arc $\gamma\subset \Tcal_n$ connecting $\Fr\Dcal^{n-1}_{\delta_n}$ and $\Fr\Dcal^n$; see {\rm (iii)}.
\end{enumerate}

By properties {\rm (ii)} and (6$_{n-1}$), one can apply Lemma \ref{lem:main} to the data
\[
\Dcal= \Dcal^{n-1},\enskip \Dcal'=\Dcal^n,\enskip \varepsilon=\epsilon_n,\enskip \Tcal=\Tcal_n, \enskip \delta=\delta_n,\enskip \Rcal= U_{n-1} ,\enskip\text{and}\enskip X=X_{n-1}.
\]
The bordered domain $U_n$ (which is Runge since $\overline{U}_{n-1}$ is) and holomorphic immersion $X_n\colon\overline{U}_n\to\c^2$  furnished by Lemma \ref{lem:main} enjoy the properties {\rm (1$_n$)} and {\rm (4$_n$)}-{\rm (7$_n$)}.  Indeed, properties {\rm (1$_n$)}, {\rm (4$_n$)}, and {\rm (6$_n$)} follow straightforwardly. 

Property {\rm (5$_n$)} for $a=n$ is given by Lemma \ref{lem:main}-{\rm (c)}, whereas for $a<n$ it is ensured by {\rm (5$_{n-1}$)}  and Lemma \ref{lem:main}-{\rm (b)} provided that $\delta_n$ is small enough.  

Property {\rm (7$_n$)} for $a=n$ follows from Lemma \ref{lem:main}-{\rm (e)} and {\rm (iv)}; for $a=n-1$ is guaranteed by {\rm (i)} and Lemma \ref{lem:main}-{\rm (b)},{\rm (c)} provided that $\delta_n$ is chosen small enough; and for $a<n-1$ by {\rm (7$_{n-1}$)}  and Lemma \ref{lem:main}-{\rm (b)} provided that $\delta_n$ is small enough. 

Finally we choose any isotopical homeomorphism $\eta_n\colon \overline{U}_n\to \overline{N}_n$ satisfying {\rm (2$_n$)}; such exists since $\chi(\overline{N}_n\setminus N_{n-1})=0=\chi(\overline{U}_n\setminus U_{n-1})$.

\noindent$\bullet$ Assume that $\chi(\overline{N}_n\setminus N_{n-1})=-1$. Consider a smooth Jordan curve $\wh{\alpha}\in \Hcal_1(\overline{N}_n,\z)\setminus \Hcal_1(\overline{N}_{n-1},\z)$ contained in $N_n$ and intersecting $N_n\setminus N_{n-1}$ in a Jordan arc $\alpha$ with  endpoints $a,$ $b$ in $b \overline{N}_{n-1}$ and otherwise disjoint from $\overline{N}_{n-1}.$ Notice that, since $\overline{N}_{n-1}$ and $\overline{N}_n$ are Runge subsets of $\Ncal$ and $\chi(\overline{N}_n\setminus N_{n-1})=-1$, then  $\Hcal_1(\overline{N}_n,\z)=\Hcal_1(\overline{N}_{n-1}\cup \alpha,\z)$ and $\overline{N}_{n-1} \cup \alpha\subset\Ncal$ is Runge as well. 

Likewise, we choose a smooth Jordan arc $\gamma\subset  \Ncal\setminus U_{n-1}$ attached  transversally to $b\overline{U}_{n-1}$ at the points $\eta_{n-1}^{-1}(a)$ and $\eta_{n-1}^{-1}(b)$ and otherwise disjoint from $\overline{U}_{n-1}$. We take $\gamma$ such that there exists an isotopical homeomorphism $\tau\colon \overline{U}_{n-1} \cup \gamma \to \overline{N}_{n-1} \cup \alpha$ so that  $\tau|_{\overline{U}_{n-1}}=\eta_{n-1}$ and $\tau(\gamma)=\alpha.$

In $\c^2$, choose a smooth regular Jordan arc $\lambda\subset \Fr\Dcal$ attached transversally to $X_{n-1}(b\overline{U}_{n-1})$ at the points $X_{n-1}(\eta_{n-1}^{-1}(a))$ and $X_{n-1}(\eta_{n-1}^{-1}(b))$ and otherwise disjoint from $X_{n-1}(\overline{U}_{n-1})$.

From {\rm (6$_{n-1}$)} and the fact that $\lambda\subset \Fr\Dcal$, there exist a tangent net $\wh\Tcal_n$ of radius $<\epsilon_n$ for $\Dcal^{n-1}$ and a positive $\wh\delta_n<\epsilon_n$, such that
\begin{enumerate}[\rm (i$'$)]
\item[\rm (ii$'$)] $X_{n-1}(b\overline{U}_{n-1})\cup\lambda \Subset \wh\Tcal_n$ and
\item[\rm (iv$'$)] $\ell(\gamma)> {\bf d}(\Dcal^{n-1},\Fr\Dcal^n)-\epsilon_n$ for any Jordan arc $\gamma\subset \wh\Tcal_n$ connecting $\Fr\Dcal^{n-1}_{\wh\delta_n}$ and $\Fr\Dcal^n$.
\end{enumerate}

Extend $X_{n-1}$, with the same name, to a smooth function $\overline{U}_{n-1}\cup \gamma \to\c^2$ mapping $\gamma$ diffeomorphically to $\lambda$. In this setting, Mergelyan's theorem furnishes a bordered domain $V_{n-1}\subset\Ncal$ with $U_{n-1}\cup\gamma \Subset V_{n-1}\Subset U_n$, $\chi(\overline{U}_n\setminus V_{n-1})=0$, and a holomorphic immersion $\wh X_{n-1}\colon \overline{V}_{n-1}\to\c^2$, as close as desired to $X_{n-1}$ in the $\Ccal^0$ topology on $\overline{U}_{n-1}\cup \gamma$ and in the $\Ccal^1$ topology on $\overline{U}_{n-1}$, such that $\wh X_{n-1}(b\overline{V}_{n-1})\subset \wh\Tcal_n\cap \Dcal^{n-1}_{\wh\delta_n}$. We finish by using Lemma \ref{lem:main} as in the previous case for small enough $\wh\delta_n$.

This concludes the construction of the sequence $\{\Upsilon_n\}_{n\in\n}$.

Set $U:=\cup_{n\in\n} U_n$. For condition Theorem \ref{th:extrinsic}-{\rm (A)}, use {\rm (2$_n$)}, $n\in\n$, and the fact that $\{N_n\}_{n\in\n}$ is an exhaustion of $\Ncal$; take into account that $\Mcal=U_0$.

From {\rm (4$_n$)} and {\rm (3$_n$)}, $n\in\n$, the sequence $\{X_n\}_{n\in\n}$ converges uniformly on compact subsets of $U$ to a holomorphic function $Y\colon U\to\c^2$ satisfying item {\rm (B)}.

Let us check that $Y$ meets all the requirements in the theorem.

\noindent$\bullet$ $Y$ is an immersion. Indeed, for any $k\in\n$, properties {\rm (3$_n$)} and {\rm (4$_n$)}, $n>k$, give that 
\begin{equation}\label{eq:cauchy}
\|Y-X_k\|_{1,\overline{U}_k;\vartheta_\Ncal}\leq \sum_{n>k} \|X_n-X_{n-1}\|_{1,\overline{U}_k;\vartheta_\Ncal}<\sum_{n>k}\epsilon_n<2\epsilon_{k+1}<\epsilon_k;
\end{equation}
hence the latter assertion in {\rm (3$_n$)} gives that $Y|_{\overline{U}_k}$ is an immersion for all $k\in\n$, and so is $Y$.
%Indeed, fix $P\in U$, take $k\in\n$ with $P\in\overline{U}_k$, and notice that 
%\begin{eqnarray*}
%\|dY/\vartheta_\Ncal\|(P) & \geq & \|dX_k/\vartheta_\Ncal\|(P)-\sum_{n>k}\|(dX_n-dX_{n-1})/\vartheta_\Ncal\|(P)\\
 %& \stackrel{\text{\rm (4$_n$), (3$_n$)}}\geq & \|dX_k/\vartheta_\Ncal\|(P)-\sum_{n>k}\rho_n/2^{n+1}\\
 %& \stackrel{\text{\eqref{eq:rhoN}}}\geq & \|dX_k/\vartheta_\Ncal\|(P)(1-\sum_{n>k}1/2^{n+1})\;>\;0,\\
%\end{eqnarray*}
%where in the latter inequality we have used that $X_k\colon\overline{U}_k\to\c^2$ is an immersion.

\noindent$\bullet$ $Y(U)\subset\Bcal$ and $Y\colon U \to \Bcal$ is proper. We proceed like in the proof of Theorem \ref{th:embe}. Up to taking limit as $n \to \infty$, the assertion $Y(U)\subset\Bcal$ follows from  {\rm (6$_n$)} and the Convex Hull Property.
Likewise, properties {\rm (5$_n$)}, $n\in\n$, and the fact that  $\{ \overline{\Dcal^{n-1}_{-\epsilon_n}} \}_{n \in \n}$ is an exhaustion by compact sets of $\Bcal$ imply that 
\begin{equation}\label{eq:CD}
Y(U \setminus \overline{U}_{k-1}) \subset \Bcal \setminus \Dcal^{k-1}_{-\epsilon_k}\quad \text{for all $k\in\n$.}
\end{equation}
This inclusion for $k=1$ proves {\rm (D)}. The properness of  $Y\colon U \to \Bcal$ follows from the fact that $\{ \overline{\Dcal^{n-1}_{-\epsilon_n}} \}_{n \in \n}$ is an exhaustion of $\Bcal$ and \eqref{eq:CD}. This concludes {\rm (C)}.

\noindent$\bullet$ $Y$ is image complete. Indeed, let $\alpha$ be a locally rectifiable divergent arc in $Y(U)$, and let us check that $\ell(\alpha)=\infty$. Since $Y\colon U\to\Bcal$ is proper, then $\alpha$ is a divergent arc in $\Bcal$ as well. Let $n_0\in\n$ large enough so that the initial point of $\alpha$ lies in $\Dcal^{n_0}$. For every $a\in \n$, $a>n_0$,  let $\alpha_a$ denote a compact sub-arc of $\alpha$ in $\overline{\Dcal^a}\setminus \Dcal^{a-1}$ connecting   $\Fr \Dcal^{a-1}$ and $\Fr \Dcal^a$. Since $\{\Dcal^n\}_{n\in\n}$ is {\bf d}-proper  in $\Bcal$ (see Def.\ \ref{def:proper}) and $\sum_{n\in\n}\epsilon_n$ converges, then it suffices to show that $\ell(\alpha_a)\geq {\bf d}(\Dcal^{a-1},\Fr\Dcal^a)-\epsilon_a$ for all $a>n_0$. 

Indeed, fix $a>n_0$. Let $n_1\in\n$, $n_1\geq a$, large enough so that $\alpha_a\subset Y({U}_{n_1})$; recall that $Y\colon U\to\Bcal$ is proper. Let $\beta_a=\cup_{j=1}^k \beta_{a,j} \subset {U}_{n_1}$ be a finite union of compact arcs with $Y(\beta_a)=\alpha_a$.
Without loss of generality,  we can suppose that the arcs $\{\alpha_{a,j}:=Y(\beta_{a,j})\colon j=1,\ldots,k\}$ are laid end to end and the endpoints of  $\alpha_{a,j}$,  $j=2,\ldots,k-1$, are double points of $Y(\overline{U}_{n_1})$.

Since  the double points of $Y$ are isolated and stable under deformations and $\{\|Y-X_n\|_{1,\overline {U}_{n_1};\vartheta_\Ncal}\}_{n\geq n_1}\to 0$  (see \eqref{eq:cauchy}),  for any sufficiently large $n\geq n_1$  we can find compact arcs 
$ {\beta}^n_{a,j}$, $j=1,\ldots,k$, in ${U}_{n_1}$ such that
\begin{itemize}
\item  $ {\alpha}^n_a:=X_n( {\beta}^n_{a})$ is a Jordan arc in $\overline{\Dcal^a}\setminus \Dcal^{a-1}$ connecting   $\Fr \Dcal^{a-1}$ and $\Fr \Dcal^a$, where $ {\beta}^n_{a}=\cup_{j=1}^k  {\beta}^n_{a,j}$, and
\item $\{\ell( {\alpha}^n_a)\}_{n>n_1} \to \ell(\alpha_a)$.
\end{itemize}
To see this, just observe that the double points of $X_n|_{U_{n_1}}$ converge to the ones of $Y|_{{U}_{n_1}}$ as $n\to \infty$, and choose $ {\beta}^n_{a,j}$ as a sufficiently slight deformation of $\beta_{a,j}$ in $U_{n_1}$ so that  $\{ {\alpha}^n_{a,j}=X_n( {\beta}^n_{a,j}) \colon j=1,\ldots,k\}$ are laid end to end, the endpoints of  ${\alpha}^n_{a,j}$,  $j=2,\ldots,k-1$, are double points of $X_n(U)$,  and  $\{\ell( X_n({\beta}^n_{a,j}))- \ell( X_n({\beta}_{a,j}))\}_{n>n_1} \to 0$.

By property {\rm (7$_n$)}, $\ell(\alpha^n_a)>{\bf d}(\Dcal^{a-1},\Fr\Dcal^a)-\epsilon_a$ for  any large enough $n\geq n_1$. Taking limits as $n\to\infty$,  $\ell(\alpha_a)=\ell(Y(\beta_a))\geq {\bf d}(\Dcal^{a-1},\Fr\Dcal^a)-\epsilon_a$ as claimed.

This shows item {\rm (E)} and concludes the proof of the theorem.
\end{proof}

\subsection*{Added in Proof} After this paper was written, Globevnik \cite{Glo3,Glo4}, with a different method, proved that every pseudoconvex domain in $\c^n$, for any $n\ge 2$, contains a complete closed complex hypersurface; in particular, this answers in the optimal way the question just below Corollary \ref{cor:iii} in which concerns assertion {\rm (i)}. More recently, Alarc\'on, Globevnik, and L\'opez \cite{AGL}, also with a new different method, constructed complete closed complex hypersurfaces in the unit ball of $\c^n$, for any $n\ge 2$, with certain control on the topology; in particular, they affirmatively answered Question \ref{qu:ex} by giving examples with any finite topology.

\subsection*{Acknowledgments}
A.\ Alarc\'on is supported by the Ram\'on y Cajal program of the Spanish Ministry of Economy and Competitiveness, he is also partially supported by MCYT-FEDER grants MTM2007-61775 and MTM2011-22547, MINECO/FEDER grant no. MTM2014-52368-P, Junta de Andaluc\'{i}a Grant P09-FQM-5088, and the grant PYR-2012-3 CEI BioTIC GENIL (CEB09-0010) of the MICINN CEI Program, Spain.

F.\ J.\ L\'{o}pez is partially supported by MCYT-FEDER research projects MTM2007-61775 and MTM2011-22547, MINECO/FEDER grant no. MTM2014-52368-P, and Junta de Andaluc\'{\i}a Grant P09-FQM-5088, Spain.

The authors wish to thank Franc Forstneri\v c for helpful discussions about the paper.

%%%%%%%%%%%%%%%%%
%%%%%%%%%%%%%%%%%
%%%%%%%%%%%%%%%%%

\end{document}